\documentclass[12pt]{amsart}
\usepackage[a4paper,margin=1.15in, marginparwidth=2.5cm]{geometry}
\usepackage{amsmath,amssymb,amsthm}
\usepackage[T1]{fontenc}
\usepackage[utf8]{inputenc}

\usepackage{libertine}
\usepackage[libertine]{newtxmath}

\usepackage{tikz}
\usetikzlibrary{intersections, calc}

\usepackage{hyperref}
\hypersetup{
    colorlinks=true,      
    linkcolor=blue,       
    filecolor=magenta,    
    urlcolor=blue,        
    citecolor=blue,       
    pdftitle={T\'{\i}tulo de mi Art\'{\i}culo}, 
    pdfpagemode=FullScreen,
}

\newtheorem{theorem}{Theorem}
\newtheorem{claim}[theorem]{Claim}
\newtheorem{lemma}[theorem]{Lemma}
\newtheorem{cor}[theorem]{Corollary}
\newtheorem{prop}[theorem]{Proposition}

\theoremstyle{definition}
\newtheorem*{remark}{Remark}
\newtheorem*{notation}{Notation}
\newcommand{\R}{\mathbb{R}}
\newcommand{\HH}{\mathcal H}

\newcommand{\dist}{\operatorname{dist}}
\newcommand{\diam}{\operatorname{diam}}

\newcommand{\EE}{\mathcal{E}}
\newcommand{\BB}{\mathcal{B}}

\DeclareMathOperator{\card}{card}
\DeclareMathOperator{\supp}{supp}
\DeclareMathOperator{\length}{length}

\usepackage[noadjust]{marginnote}
\usepackage{ifoddpage}

\newcommand{\mystarn}  
{\checkoddpage\ifoddpage\reversemarginpar\else\normalmarginpar\fi\marginnote{\nr{$\pmb{\longrightarrow}$\quad}}}
\newcommand{\nr}[1]{\textcolor{red}{#1}}           

\title[Curly Kakeya]{Curly Kakeya: The Hausdorff dimension of sets\\ containing circles or line segments in many directions}
\author{A. C\'ordoba}
\address{Universidad Aut\'onoma de Madrid and ICMAT, Madrid, Spain}
\email{antonio.cordoba@uam.es}

\subjclass[2020]{42B25, 28A75} 
\keywords{Hausdorff dimension, Minkowski sausage}
\date{\today}

\begin{document}

\begin{abstract}
In this paper, we establish a lower bound for the Hausdorff dimension of a set $K\subset \mathbb{R}^n$ that contains a dilated and translated copy of every meridian,
that is, of every $(d-2)$-dimensional sphere passing through the poles of $S^{d-1}$, a $(d-1)$-dimensional sphere, with $3\le d\le n$, and $S^{d-1}$ contained in $\mathbb{R}^n$.
Under this assumption, we have
\[
\dim_H(K)\ge d-1.
\]
This result is closely related to, and reminiscent of, the classical Kakeya set problem. We build upon this connection, employing similar techniques to show
that if $K\subset \mathbb{R}^n$ contains a unit straight line segment in every direction corresponding to a smooth curve on $S^{n-1}$,
then its Hausdorff dimension is $\ge 2$.
\end{abstract}

\maketitle

\section{Circles}

To facilitate the reading, we shall consider first the relevant case $n=3$. There is a small difference in the final computation between this case and the general $n\ge4$, which probably does not justify completely their separate writing. Let us hope, nevertheless, that the redundancy is compensated by the helpful 
 familiarity with three dimensions and the fact that for $n=3$, the result  suggests an intriguing open problem for \mbox{$n>3$}. Furthermore, the proof can be easily extended to other surfaces of revolution with mild assumptions about the curvature of its generatrix. Dimension $n=2$ has its own peculiarities, and will be treated immediately after the case $n=3$ (Proposition~\ref{pr5}).

Given a circle $C$ in $\R^3$ as the boundary of a two-dimensional planar disc $D$, i.e., $C=\partial D$, we define the  direction of $C$ to be the normal direction to $D$.

Let us fix the line North-South in $\R^3$ passing through 
 the origin of coordinates and consider all directions $\theta$ perpendicular to it. That is, $\theta\in \EE$ (the equator) defines the direction of sets of parallel circles in  $\R^3$.

%
%

\begin{figure}[ht]

\begin{tikzpicture}[scale=3, line join=round, line cap=round]
  \def\R{1}     
  \def\rx{0.5}  
  \def\ry{0.3}  

  \tikzset{dashed lines/.style={color=gray, dashed, line width=0.6pt}}
  \tikzset{visible lines/.style={thick, line width=0.8pt}}

  \draw[visible lines] (0,0) circle (\R);

  \draw[dashed lines] (\R,0) arc (0:180:{\R} and \ry);
  \draw[visible lines, color=blue, name path=ecuador] (-\R,0) arc (180:360:{\R} and \ry);

  \draw[dashed lines] (0,\R) arc (90:-90:\rx\space and \R);
  \draw[visible lines, color=red, name path=meridiano] (0,-\R) arc (270:90:\rx\space and \R);

  \path[name intersections={of=ecuador and meridiano, by=X}];

  \draw[line width=2.5pt, color=red] (X) arc (195.23:150:\rx\space and \R);

  \fill[black] (0,0) circle (0.4pt);
  \node[above=2pt] at (0,\R) {$N$};
  \node[below=2pt] at (0,-\R) {$S$};
  \node[color=blue] at (0.8,-0.28) {$\mathcal{E}^+$};
  \node[color=black] at (-0.2,0.7) {$C_\theta$};
  \node[color=red, black] at (-0.4,0.1) {$C_\theta^1$};

\end{tikzpicture}

\caption{Octant $C^1_{\theta}$}\label{fig1}
\end{figure}


\begin{theorem}\label{th1} Let $K$ be a bounded subset of\/ $\R^3$ containing a circle $C_{\theta}$ of radius $r(\theta) >0$ in each direction $\theta\in \EE$. Then we have the lower bound
\[
\dim_H(K)\ge 2
\]
for the Hausdorff dimension of\/ $K$.
\end{theorem}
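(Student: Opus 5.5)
The plan is to reduce the statement to the planar Kakeya problem by a Lipschitz projection. Since each direction $\theta\in\EE$ is perpendicular to the North--South axis, the plane of the disc $D_\theta$ bounded by $C_\theta$ has normal $\theta$ and therefore \emph{contains} the North--South direction. Let $\pi$ be the orthogonal projection of $\R^3$ onto the equatorial plane $\Pi$ (the plane orthogonal to the North--South axis). The vertical plane containing $C_\theta$ projects onto a line in $\Pi$ with direction $\theta^\perp$, the horizontal unit vector orthogonal to $\theta$. Hence $\pi(C_\theta)$ is a segment of length $2r(\theta)$ in direction $\theta^\perp$. In fact the octant $C_\theta^1$ of Figure~\ref{fig1}, going from the equatorial point up to the top of the circle, already projects onto a segment of length $r(\theta)$. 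As $\theta$ runs over $\EE$, $\theta^\perp$ runs over all directions of $\Pi$. So $\pi(K)\subset\Pi\cong\R^2$ contains a nondegenerate segment in every direction.

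Next I will make the lengths uniform. Write $\EE=\bigcup_k \EE_k$ with $\EE_k=\{\theta: r(\theta)\ge 1/k\}$. Some $\EE_k$ has positive outer measure on the circle $\EE$, since otherwise $\EE$ would be a countable union of null sets. For that $k$, $\pi(K)$ contains a segment of length $1/k$ in each direction of a set of directions of positive (outer) measure.

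Then I will invoke the two-dimensional Kakeya bound. The C\'ordoba Kakeya maximal inequality in $\R^2$, with its $\sqrt{\log(1/\delta)}$ loss, is what I would use; it is the quantitative version of Davies' theorem. It states that
\[
\Bigl\|\sup_{T\ni x} \frac{1}{|T|}\int_T |f|\Bigr\|_{L^2}\lesssim \Bigl(\log\tfrac1\delta\Bigr)^{1/2}\|f\|_{L^2},
\]
where the supremum is over $\delta\times 1$ rectangles $T$ containing $x$. The standard covering argument then gives the following. Suppose $\{B(x_j,\delta_j)\}$ is a cover of the set with $\sum_j\delta_j^{\alpha}$ small, for some $\alpha<2$. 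Dyadically pigeonhole the radii, and pigeonhole directions so that a set of directions of positive measure is left. The maximal inequality then forces $\sum_j \delta_j^{2-\varepsilon}\gtrsim 1$, which is a contradiction. This argument only needs the directions to form a set of positive measure and the segments to have length bounded below, so it applies to $\pi(K)$. Hence $\dim_H\pi(K)=2$.

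Finally, $\pi$ is $1$-Lipschitz and Lipschitz maps do not increase Hausdorff dimension. Therefore $\dim_H(K)\ge\dim_H\pi(K)=2$.

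I expect the main obstacle to be the measure-theoretic bookkeeping. The first issue is that $r(\theta)$ is an arbitrary positive function, with no measurability assumed. The second is that the two-dimensional Kakeya theorem must be used in its "positive-measure set of directions" form rather than "all directions". Both are handled by the pigeonholing above, working with outer measure and with the $\delta$-discretized directions.

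If one prefers to avoid quoting Davies' theorem, the same scheme can be run directly in $\R^3$. Take the arcs $C^1_\theta$ and their $\delta$-neighbourhoods (Minkowski sausages). Control the overlaps $|N_\delta(C^1_\theta)\cap N_\delta(C^1_{\theta'})|\lesssim \delta^2/|\theta-\theta'|$, using the fact that two such vertical planes meet along a vertical line. Then run C\'ordoba's $L^2$ argument on $\sum_\theta \chi_{N_\delta(C^1_\theta)}$. This yields the same logarithmic bound, and it is presumably the route that generalizes to the meridian statement in higher dimensions.
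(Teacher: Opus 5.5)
Your proof is correct, and it takes a genuinely different route from the paper. Each $C_\theta$ lies in a vertical plane, so the $1$-Lipschitz projection $\pi$ onto the equatorial plane maps it onto a segment of length $2r(\theta)$ in direction $\theta^\perp$. After pigeonholing $r(\theta)\ge 1/k$ on a set of directions of positive outer measure, $\pi(K)$ is a planar Kakeya-type set. Davies' theorem, in its C\'ordoba $L^2$ form where any polylogarithmic loss is harmless, then gives $\dim_H\pi(K)=2$.

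The paper never projects. It runs C\'ordoba's $L^2$ argument directly in $\R^3$ on the Minkowski sausages $D_{\theta_j}$ of the arcs $C^1_{\theta_j}$. It uses the overlap bound $|D_{\theta_i}\cap D_{\theta_j}|\lesssim 2^{-2k}/(|i-j|+1)$, read off from the dihedral angle, and packages the conclusion as the single-scale Claim~\ref{cl2}. That route is self-contained and is the template the paper reuses for Theorems~\ref{th6}, \ref{th10} and \ref{th11}.

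Your route is shorter and more robust. It only uses that each curve sits in a vertical plane with normal $\theta$ and has a nondegenerate projection. So it needs no rectifiability, no curvature hypothesis and no choice of arc; it gives the bound $2$ just as well for dilated copies of the von Koch snowflake, which the paper leaves open. Likewise, projecting $\R^n$ onto the coordinates $x_1,\dots,x_{d-1}$ turns Theorem~\ref{th6} into the disc statement of Theorem~\ref{th11} in $\R^{d-1}$.

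It also avoids a delicate point of the direct method. The overlap bound needs the arcs to stay transversal to the vertical line in which two of the planes meet. But the octant $C^1_\theta$ of Figure~\ref{fig1}, like every octant avoiding the poles, ends at an equatorial point, where the tangent is vertical. If all circles are translated so that these octants are tangent to the $z$-axis at the origin, the sausages overlap in volume $\simeq 2^{-2k}|i-j|^{-1/2}$. The direct argument therefore needs arcs kept away from the equatorial points, for example the octants next to the poles.

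Two slips, neither affecting your main argument:
\begin{enumerate}
\item The arc from the equatorial point to the top is a quarter of $C_\theta$, not the octant $C^1_\theta$. The octant projects onto a segment of length $(1-1/\sqrt2)\,r(\theta)$.
\item In your closing aside, the $\R^3$ overlap bound should read $\delta^3/(|\theta-\theta'|+\delta)$, and it holds only under the transversality caveat above. The bound $\delta^2/|\theta-\theta'|$ you wrote is the planar one and is trivial for sausages of volume $\simeq\delta^2$.
\end{enumerate}
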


\begin{notation}
Given one of those circles $C_{\theta}$, the choice of North-South direction and horizontal (equatorial) plane allows a natural division of $C_{\theta}$ into eight octants, $C^j_{\theta}$, among which we choose one, $C^1_{\theta}$, not passing through the poles. See Figure~\ref{fig1}.
\end{notation}

\begin{remark} Let us recall that given a bounded set $E\subset \R^n$ and given $\alpha\ge0$ and $\delta>0$, we have the definition
\[
\HH_\delta^\alpha(E):=\inf\Big\{\sum_k [\diam(Q_k)]^\alpha\Big\},
\]
where the ``inf'' is taken over all countable coverings of $E$ by cubes $Q_k$ of diameter less than $\delta$. We define the Hausdorff measure as
\[
\HH^\alpha(E):=\lim_{\delta\to0}\HH_\delta^\alpha(E).
\]
The limit above always exists, although it can be infinite, and has the following properties:
\begin{enumerate}
\item $\HH^\alpha$ is countably additive, invariant under translations and rotations and satisfies
\[
\HH^\alpha(\lambda E)=\lambda^\alpha\HH^\alpha(E),\quad \lambda>0.
\]
\item $\HH^0(E)$ counts the number of points inside $E$, while
\[
\HH^n(E)=c_n|E|,
\]
where $|E|$ denotes Lebesgue measure in $\R^n$.
\item $\HH^\alpha(E)<\infty$ implies that $\HH^\beta(E)=0$ if $\beta>\alpha$. On the other hand, if $\beta<\alpha$ and $\HH^\alpha(E)>0$, then $\HH^\beta(E)=\infty$.
\end{enumerate}
Therefore, given a bounded  set $E\subset \R^n$, there is a unique $\alpha$ such that
\[
\HH^\beta(E)=0 \ \text{ if } \beta>\alpha,\quad \text{and}\quad \HH^\beta(E)=\infty \ \text{ if } \beta<\alpha.
\]
That unique $\alpha$ is called $\dim_H(E)$, the Hausdorff dimension  of $E$.
\end{remark}

An important observation is that, in the definition above, one can replace cubes by balls, or by cubes belonging to a mesh $\Gamma_k$ of width $2^{-k}$, where $k$ is a positive integer, without changing the value of $\dim_H(E)$.

There is another notion of dimension
in \(\mathbb{R}^n\), namely the box-counting  or
Minkowski dimension, \(\dim_B(F)\), of a bounded
set \(F\)  (see \cite{Fa}).

We have several equivalent definitions,
one of which is the following. Let
\(N_\varepsilon\) be the number of cubes in a
mesh of side length \(\varepsilon\) needed to
cover \(F\). Then
\[
\underline{\dim}_{\,B}(F)
=
\liminf_{\varepsilon \to 0}
\frac{\log N_\varepsilon}{|\log \varepsilon|}
\quad\text{and}\quad
\overline{\dim}_{B}(F)
=
\limsup_{\varepsilon \to 0}
\frac{\log N_\varepsilon}{|\log \varepsilon|}\cdot
\]
In the case that both limits above
coincide, their common value is
the box-counting dimension: \(\dim_B(F)\).

If we denote by \(F_\varepsilon=\{x: \operatorname{dist}(x,F)\leq \varepsilon\}\)
the Minkowski sausage  (following the terminology of Mandelbrot \cite{Ma}) of the set
\(F\), then \(\dim_B(F)\) describes the way
in which the measure \(|F_\varepsilon|\) decreases,
that is,
\[
\underline{\dim}_{\,B}(F)
=
n-
\limsup_{\varepsilon \to 0}
\Big|
\frac{\log |F_\varepsilon|}{\log \varepsilon}
\Big|
\quad\text{and}\quad
\overline{\dim}_{B}(F)
=
n-
\liminf_{\varepsilon \to 0}
\Big|
\frac{\log |F_\varepsilon|}{\log \varepsilon}
\Big|
\]

Also we have the inequality:
\[
\underline{\dim}_{\,B}(F)
\geq
\dim_{H}(F).
\]

Observe that since $\dim_H(S^2)=2$, the estimate of Theorem~\ref{th1} is best possible.

\begin{proof}[Proof of Theorem~\textup{\ref{th1}}] Let  $\{Q_\sigma\}$ be a covering of the set $K$ by dyadic cubes of side length
$
\ell(Q_\sigma)\le 2^{-k_0},
$ such that
\[
K\subset\bigcup_\sigma Q_\sigma=\bigcup_{k\ge k_0}\Big(\bigcup_\sigma Q_\sigma^k\Big),
\]
where $Q_\sigma^k\in\Gamma_k$, $\ell(Q_\sigma^k)=2^{-k}$. Defining
\[
E^k=\bigcup_\sigma Q_\sigma^k,
\quad
N_k=\card\{Q_\sigma^k\},
\]
the proof of Theorem~\ref{th1} follows if for every $\alpha<2$, there exists $\lambda_\alpha >0$ such that we have the estimate
\[
\sum_{k\ge k_0}N_k\,2^{-k\alpha}\ge\lambda_\alpha>0.
\]
But, in fact, it will be convenient to prove something stronger, namely:

\begin{claim}\label{cl2}
For every $\alpha<2$,  there exists $\lambda_\alpha >0$ such that, given a covering as above, there is $k\ge k_0$ such that
\[
N_k\,2^{-k\alpha}\ge\lambda_\alpha>0.
\]
\end{claim}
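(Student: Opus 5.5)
The plan is to prove the Claim by a pigeonhole argument on scales and directions, followed by a Córdoba-type $L^2$ overlap estimate for $\delta$-neighbourhoods of circular arcs, with $\delta=2^{-k}$.

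\emph{Step 1: normalisation.} Since $r(\theta)>0$ for every $\theta\in\EE$ and $\EE=\bigcup_m\{\theta: r(\theta)\ge 1/m\}$, some level set $\EE_0=\{\theta: r(\theta)\ge r_0\}$ has positive outer measure $\mu_0$ on $\EE$. Boundedness of $K$ also gives $r(\theta)\le R$. For $\theta\in\EE_0$, I replace the octant $C^1_\theta$ by a fixed sub-arc $A_\theta\subset C^1_\theta$ of length $\ge c\,r_0$. This sub-arc stays a fixed angular distance away both from the poles and from the equatorial points of $C_\theta$. The point of the choice is that the unit tangent $t(x)$ of $A_\theta$ makes an angle $\ge c_0>0$ with the vertical (North–South) direction. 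The constants $r_0,R,c,c_0,\mu_0$ depend only on $K$, so $\lambda_\alpha$ may depend on them.

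\emph{Step 2: pigeonholing.} Each $A_\theta$ is covered by $\bigcup_{k\ge k_0}E^k$. Since $\sum_k k^{-2}<\infty$, for every $\theta\in\EE_0$ there is some $k$ with
\[
\length\big(A_\theta\cap E^k\big)\ge c\,r_0\,k^{-2}.
\]
Pigeonholing in $\theta$ then yields a single $k\ge k_0$ and a set $\Theta_k\subset\EE_0$ of outer measure $\ge c\,\mu_0 k^{-2}$ on which this holds. Set $\delta=2^{-k}$ and $B_\theta=\big(A_\theta\cap E^k\big)_\delta$. Each $B_\theta$ has measure $\gtrsim k^{-2}\delta^2$. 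All the $B_\theta$ lie in $(E^k)_\delta$, whose measure is $\lesssim N_k\delta^3$. I pick a maximal $\delta$-separated family $\theta_1,\dots,\theta_M$ in $\Theta_k$, so that $M\gtrsim k^{-2}\delta^{-1}$.

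\emph{Step 3: overlap estimate (the main obstacle).} The key geometric lemma I need is
\[
\big|B_{\theta_i}\cap B_{\theta_j}\big|\lesssim \frac{\delta^3}{|\theta_i-\theta_j|+\delta}.
\]
Both circles lie in planes whose normals are horizontal, so the two planes meet along a vertical line, with angle $\approx|\theta_i-\theta_j|$ between them. The $\delta$-neighbourhood of $C_{\theta_j}$ lies in the $\delta$-slab around its plane. A point of $A_{\theta_i}$ moves away from that plane at rate $\gtrsim c_0|\theta_i-\theta_j|$, because its tangent is not vertical; this is exactly why Step 1 excludes the equatorial points, where the tangent is vertical. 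Hence $A_{\theta_i}$ spends length $\lesssim\delta/|\theta_i-\theta_j|$ inside the slab, crossing it at most $O(1)$ times since it is a circular arc. Multiplying by the $\delta^2$ cross-section of the tube gives the bound. Arbitrary translations and dilations (within $[r_0,R]$) do not affect this argument. The delicate part is making the transversality quantitative uniformly, and I would handle it with explicit parametrisations of the arcs.

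\emph{Step 4: conclusion.} By Cauchy–Schwarz,
\[
N_k\delta^3\gtrsim\Big|\bigcup_i B_{\theta_i}\Big|\ge\frac{\big(\sum_i|B_{\theta_i}|\big)^2}{\sum_{i,j}|B_{\theta_i}\cap B_{\theta_j}|}\gtrsim\frac{(Mk^{-2}\delta^2)^2}{M\delta^2\log(1/\delta)}\gtrsim\frac{\delta}{k^{7}}.
\]
Here the double sum is estimated using the $\delta$-separation of the $\theta_i$. This gives $N_k\gtrsim 2^{2k}k^{-7}$, and therefore
\[
N_k2^{-k\alpha}\gtrsim 2^{k(2-\alpha)}k^{-7}\ge\lambda_\alpha>0
\]
uniformly in $k\ge1$, since $\alpha<2$. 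This proves the Claim.
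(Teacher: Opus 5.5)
Your proof is correct and follows the paper's route: pigeonholing first over scales and then over directions, a $2^{-k}$-separated family of directions, $2^{-k}$-neighbourhoods of the arcs, and Cauchy--Schwarz with pairwise overlaps $\lesssim\delta^3/(|\theta_i-\theta_j|+\delta)$ summing to a logarithmic loss. Your $k^{-2}$ weights and the containment $B_\theta\subset(E^k)_\delta$ are harmless substitutes for the paper's $\lambda_\beta 2^{-k\beta}$ and its maximal-function comparison. One small correction: $|\theta_i-\theta_j|$ must be read modulo $\theta\mapsto-\theta$, or $\EE_0$ restricted to a short arc, because $C_\theta$ and $C_{-\theta}$ lie in parallel planes.

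Your insistence on a sub-arc bounded away from the equatorial points is in fact an improvement on the paper. The paper's octant $C^1_\theta$ ends at an equatorial point, where suitably translated circles can all pass through a common point with a common vertical tangent. There the overlap can be as large as $2^{-2k}|i-j|^{-1/2}$, so the paper's bound $2^{-2k}/(|i-j|+1)$ fails, whereas your slab argument gives the needed transversality uniformly.
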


By hypothesis, $K$ contains a circle $C_\theta$ of radius $r(\theta)>0$ in each direction  $\theta\in\EE$. An elementary argument allows us to conclude the existence of $c>0$ such that the set $\EE^{\ast}=\{\theta\in\EE, c\leq r(\theta)\leq 2c\}$ has positive (exterior) measure $\HH^1(\EE^{\ast})>0$.

\begin{lemma}\label{le3}For each $\beta>0$, there exist $\lambda_\beta>0$ and $k\ge k_0$ such that
\[
  \HH^1(C_\theta^1\cap E^k)\ge\lambda_\beta\,2^{-k\beta},\quad\text{for each $\theta\in\EE^{\ast}$}.
\]
\end{lemma}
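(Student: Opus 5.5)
The plan is a weighted pigeonhole argument along each octant $C^1_\theta$. The weights are chosen so that the constant $\lambda_\beta$ does not depend on $\theta$ or on the covering. After that, a countable-subadditivity step is used to select one $k$; the uniformity in $\theta$ is the delicate point.

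\emph{Step 1 (length of an octant).} Fix $\theta\in\EE^{\ast}$. Then $c\le r(\theta)\le 2c$, so the octant $C^1_\theta$ is a circular arc of length $\pi r(\theta)/4\ge \pi c/4$. Hence $\HH^1(C^1_\theta)\ge \pi c/4$, uniformly in $\theta\in\EE^{\ast}$.

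\emph{Step 2 (covering of the arc).} Since $C^1_\theta\subset K\subset\bigcup_{k\ge k_0}E^k$, countable subadditivity of $\HH^1$ gives
\[
\sum_{k\ge k_0}\HH^1(C^1_\theta\cap E^k)\ \ge\ \frac{\pi c}{4}.
\]

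\emph{Step 3 (weighted pigeonhole).} Put $\lambda_\beta:=(1-2^{-\beta})\,\pi c/8$. Suppose that $\HH^1(C^1_\theta\cap E^k)<\lambda_\beta 2^{-k\beta}$ for every $k\ge k_0$. Summing the geometric series gives
\[
\sum_{k\ge k_0}\HH^1(C^1_\theta\cap E^k)<\frac{\lambda_\beta\,2^{-k_0\beta}}{1-2^{-\beta}}\le \frac{\pi c}{8},
\]
which contradicts Step 2. So for every $\theta\in\EE^{\ast}$ there is some $k(\theta)\ge k_0$ with
\[
\HH^1(C^1_\theta\cap E^{k(\theta)})\ge\lambda_\beta 2^{-k(\theta)\beta}.
\]
This $\lambda_\beta$ is independent of $\theta$, of $k_0$, and of the covering.

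\emph{Step 4 (one $k$ for all $\theta$; the main obstacle).} Step 3 produces a $k$ that a priori depends on $\theta$, and the statement asks for a single $k$. I would handle this as follows. The sets $\EE^{\ast}_k=\{\theta\in\EE^{\ast}: k(\theta)=k\}$, $k\ge k_0$, cover $\EE^{\ast}$. Since $\HH^1(\EE^{\ast})>0$ and exterior $\HH^1$ is countably subadditive, some $\EE^{\ast}_{k}$ has positive exterior measure. Here I would use that the only property of $\EE^{\ast}$ needed in the rest of the argument is that it has positive exterior $\HH^1$-measure and that $c\le r(\theta)\le 2c$ on it. So I would replace $\EE^{\ast}$ by $\EE^{\ast}_{k}$, with $\lambda_\beta$ unchanged. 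The conclusion then holds with this one $k$ for every $\theta$ in the (renamed) $\EE^{\ast}$.

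I expect this last step to be the real difficulty. A literal uniform choice of $k$ over the originally fixed $\EE^{\ast}$ cannot come from the pigeonhole argument alone, so the lemma has to be read with $\EE^{\ast}$ allowed to shrink, depending on the covering, to a subset of positive exterior measure. If a smaller positive exterior measure is harmful later, for instance in Claim~\ref{cl2}, the fallback is more quantitative. Instead of an arbitrary $k$ with $\HH^1(\EE^{\ast}_k)>0$, I would pick $k$ with $\HH^1(\EE^{\ast}_k)\gtrsim_\varepsilon 2^{-(k-k_0)\varepsilon}\HH^1(\EE^{\ast})$, using the same geometric-weight pigeonhole a second time. The small loss $2^{-k\varepsilon}$ can then be absorbed into the gap between $\alpha$ and $2$ in Claim~\ref{cl2}.
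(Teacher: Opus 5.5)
Your Steps 1--3 are the paper's proof: the same geometric-series contradiction, and the paper likewise obtains only some $k=k(\theta)$ for each $\theta$. Your version is slightly cleaner, because you apply subadditivity directly to $C^1_\theta$, whereas the paper passes through the whole circle and bounds the other seven octants by $\frac78\HH^1(C_\theta)$, which tacitly requires the sets $E^k$ to be essentially disjoint along $C_\theta$. The uniformization in your Step 4 is done in the paper right after the lemma, by pigeonholing the sets $\Omega_k$ to get $\HH^1(\Omega_k)\ge\lambda_\beta 2^{-k\beta}$ for one $k$, which is exactly your quantitative fallback; that fallback is needed, not optional, since with merely positive exterior measure the number of $2^{-k}$-separated directions, and hence $\lambda_\alpha$ in Claim~\ref{cl2}, would depend on the covering.
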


\begin{proof} Observe that
\[
\HH^1(C_\theta)=\sum_{k\ge k_0}\HH^1(C_\theta\cap E^k)
=\sum_{j=1}^8\sum_{k\ge k_0}\HH^1(C_\theta^j\cap E^k).
\]
The assumption that for each $k\ge k_0$ we have
\[
\HH^1(C_\theta^1\cap E^k)\le\lambda_\beta\,2^{-k\beta}
\]
yields the inequality
\[
\HH^1(C_\theta)\le\sum_{k\ge k_0}\lambda_\beta \,2^{-k\beta}+\sum_{k\ge k_0}\sum_{m>1}\HH^1(C_\theta^m\cap E^k)
\le \lambda_\beta\,\frac{2^{-k_0\beta}}{1-2^{-\beta}}+\frac78\,\HH^1(C_\theta),
\]
which is clearly false for $\lambda_\beta$ small enough. Therefore, for each  $\theta\in\EE^{\ast}$, we have
\[
\HH^1(C_\theta^1\cap E^k)\ge \lambda_\beta \,2^{-k\beta},\quad \text{for a certain } k\ge k_0.\qedhere
\]
\end{proof}

Next let us introduce the sets
\[
  \Omega_k=\{\theta\in\EE^{\ast} : \HH^1(C_\theta^1\cap E^k)\ge\lambda_\beta\,2^{-k\beta}\},
\]
 and observe that
\[
\mathcal{E}^{\ast}\subseteq \bigcup_{k\ge k_0} \Omega_k,
\]
  because for each $\theta\in\EE^{\ast}$, there exists $k\ge k_0$ satisfying $\theta\in\Omega_k$, and so
\[
 \HH^1(\EE^{\ast})\le\sum_{k\ge k_0}\HH^1(\Omega_k).
\]
It then follows that there exists $k\ge k_0$ such that
\[
\HH^1(\Omega_k)\ge\lambda_\beta\, 2^{-k\beta},
\]
for $\lambda_\beta$ small enough, implying that we can select $2^{k(1-\beta)}\lambda_\beta$ directions $\theta_j$ in $\Omega_k$ such that  $|\theta_j-\theta_\ell|\ge 2^{-k}$, for $j\ne \ell$, and thus, ordering properly,
\[
|\theta_j-\theta_\ell|\ge 2^{-k}|j-\ell|,
\quad j,\ell=1,\ldots,2^{k(1-\beta)}\lambda_\beta.
\]
  To each East-Northeast octant $C_{\theta_j}^1$ contained in $K$ in those selected directions, we associate the Minkowski sausages
\begin{align*}
D_{\theta_j}&=\{x:\dist(x,C_{\theta_j}^1)\le 2^{-k}\},
\\
\widetilde{D}_{\theta_j}&=\{x:\dist(x,C_{\theta_j}^1\cap E^k)\le 2^{-k}\},
\end{align*}
and observe that
\[
|D_{\theta_j}\cap E^k|\simeq |\widetilde{D}_{\theta_j}|.
\]
More precisely, there is a universal constant $a$, $1\leq a <\infty$, such that
\[
\frac{1}{a}\,|D_{\theta_j}\cap E^k|\le  |\widetilde{D}_{\theta_j}|\le {a}\,|D_{\theta_j}\cap E^k|.
\]
This is because
\[
D_{\theta_j}\cap E^k\subset \{x: M(\chi_{\widetilde{D}_{\theta_j}})\ge 1/4\}\quad\text{and}\quad
\widetilde{D}_{\theta_j}\subset \{x: M(\chi_{{D}_{\theta_j}\cap E^k})\ge 1/4\},
\]
where $M$ denotes the Hardy--Littlewood maximal function.

Since we have the estimate
\[|\widetilde{D}_{\theta_j}|\gtrsim \lambda_\beta\,2^{-k\beta}\,2^{-2k},
\]
we get the inequality
\begin{equation}
\sum_j |D_{\theta_j}\cap E^k|\gtrsim\lambda_\beta^2 \,2^{-k}\, 2^{-2k\beta}.
\label{eq1}
\end{equation}

The proof of Claim \ref{cl2} will be an immediate consequence of the estimate
\begin{equation}
\Big|\Big(\bigcup_j D_{\theta_j}\Big)\cap E^k\Big|
\gtrsim\frac{\lambda_\beta^3}{k}\,2^{-k}\,2^{-3k\beta}.
\label{eq2}
\end{equation}
This is because
\[
N_k\,2^{-3k}=|E^k|\ge\Big|E^k\cap\Big(\bigcup_jD_{\theta_j}\Big)\Big|
\gtrsim\frac{\lambda_\beta^3}{k}\,2^{-k}\,2^{-3k\beta},
\]
and therefore,
\[
N_k\,2^{-k\alpha}\gtrsim\frac{\lambda_\beta^3}{k}\,2^{k(2-\alpha-3\beta)}\ge\lambda_\alpha>0
\]
for $\alpha<2$, so long as $\beta>0$ has been chosen small enough.

\vskip1pt
Finally, to prove \eqref{eq2}, we write
\[
\sum_j |D_{\theta_j}\cap E^k|=\int\Big[\sum_j\chi_{D_{\theta_j}\cap E^k}\Big].
\]
Then \eqref{eq1}, together with H\"{o}lder's inequality, yields
\begin{align*}
\lambda_\beta^2 \,2^{-k(1+2\beta)}
&\le\Big|\Big(\bigcup_jD_{\theta_j}\Big)\cap E^k\Big|^{1/2}\,\Big[ \sum_{i,j} |D_{\theta_i}\cap D_{\theta_j}|\Big]^{1/2}
\le|E^k|^{1/2}\,\Big[\sum_{i,j}\frac{2^{-2k}}{|i-j|+1}\Big]^{1/2}.
\end{align*}
The estimation of $|D_{\theta_1}\cap D_{\theta_j}|$ follows from a straightforward trigonometrical calculation, taking into account that the dihedral angle between the meridians $C_{\theta_i}$ and $C_{\theta_j}$ has value $\simeq |i-j|\, 2^{-k}$, see
Figure \ref{fig2}.
\begin{figure}[ht]
\begin{tikzpicture}[scale=3, line join=round, line cap=round, >=stealth]
  \def\R{1}     
  \def\rx{0.5}  
  \def\ry{0.3}  
  \def\rxB{0.65}  

  \tikzset{dashed lines/.style={color=gray, dashed, line width=0.6pt}}
  \tikzset{visible lines/.style={thick, line width=0.8pt}}

  \draw[visible lines] (0,0) circle (\R);

  \draw[dashed lines] (\R,0) arc (0:180:{\R} and \ry);
  \draw[visible lines, color=blue, name path=ecuador] (-\R,0) arc (180:360:{\R} and \ry);

  \draw[visible lines, color=red, name path=meridiano] (0,-\R) arc (270:90:\rx\space and \R);

  \draw[visible lines, color=orange, name path=meridianoB] (0,-\R) arc (-90:90:\rxB\space and \R);

  \draw[dashed lines] (0,\R) -- (0,-\R);

  \path[name intersections={of=ecuador and meridiano, by=X}];
  \draw[line width=2.5pt, color=red] (X) arc (195.23:150:\rx\space and \R);

  \path[name intersections={of=ecuador and meridianoB, by=XB}];
  \draw[line width=2.5pt, color=orange] (XB) arc (-13.44:30:\rxB\space and \R);

  \draw[line width=0.4pt, dashed] (0,0) -- (X);
  \draw[line width=0.4pt, dashed] (0,0) -- (XB);

  \draw[line width=0.4pt] (-0.11,-0.06) arc (205:346:0.14 and 0.11);

  \fill[black] (0,0) circle (0.4pt);
  \node[above=2pt] at (0,\R) {$N$};
  \node[below=2pt] at (0,-\R) {$S$};

  \node[color=red, left] at (-0.45,0.2) {$D_{\theta_j}$};
  \node[color=orange, right] at (0.6,0.3) {$D_{\theta_i}$};

  \node[scale=0.8] at (0.04,-0.18) {\footnotesize$\theta_j-\theta_i$};


  \draw[->, line width=0.6pt] (1.3,0.1) to[out=20,in=160] (1.7,0.1);

  \begin{scope}[xshift=1.35cm, yshift=-0.1cm, scale=1.2]

    \draw[line width=2.5pt, color=orange] ({\rxB*cos(-13.44)},{\R*sin(-13.44)}) arc (-13.44:30:\rxB\space and \R);

    \begin{scope}[xshift=1.05cm, yshift=-0.08cm, rotate=-15]
      \draw[line width=2.5pt, color=red] ({\rx*cos(195.23)},{\R*sin(195.23)}) arc (195.23:150:\rx\space and \R);

    \begin{scope}[xshift=-0.78cm, yshift=-0.05cm, rotate=23]
    \draw[line width=2.5pt, color=black] (0.355,0.10) -- (0.375,0.13);
\end{scope}
\end{scope}
  \end{scope}

\end{tikzpicture}

\caption{Intersection of $D_{\theta_j}$ and $D_{\theta_i}$.}\label{fig2}
\end{figure}

  Since
\[
\sum_{i,j=1}^T\frac{1}{|i-j|+1}\le T\sup_i\sum_{j=1}^T\frac{1}{|i-j|+1}\lesssim T\log T,
\]
and in our case we have
\[
T\simeq 2^{k(1-\beta)}\, \lambda_\beta,
\]
  we get
\[
|E^k|\gtrsim\frac{\lambda_\beta^3}{k}\,2^{-k(1+3\beta)}.
\qedhere\]
\end{proof}

Theorem~\ref{th1} is sharp, as the sphere $S^2\subset\mathbb R^3$ shows. It has the following corollary, in the spirit of the classical Kakeya needle problem.

\begin{cor}\label{co4} A bounded set  $K\subset\mathbb R^3$ containing a  circle in every spatial position must satisfy
\[
\dim_H(K)\ge 2.
\]
\end{cor}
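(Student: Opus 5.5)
Corollary~\ref{co4} follows directly from Theorem~\ref{th1}: the hypothesis of the corollary is much stronger than that of the theorem, so we only need to extract the weaker one. The plan has three steps.

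\emph{Step 1: fix the equator.} We fix once and for all a North--South axis through the origin, for instance the $x_3$-axis, and let $\EE$ be the great circle of unit vectors perpendicular to it. The phrase ``a circle in every spatial position'' is read as follows: for every unit normal direction $\theta\in S^2$ (and, if one wishes, every radius and center), $K$ contains a circle whose supporting plane has normal $\theta$. In particular, for each $\theta\in\EE$ we choose one such circle $C_\theta\subset K$, whose direction in the sense of the paper is $\theta$, with radius $r(\theta)>0$.

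\emph{Step 2: check the hypotheses of Theorem~\ref{th1}.} The set $K$ is bounded by assumption. It contains a circle $C_\theta$ of positive radius in each direction $\theta\in\EE$. Theorem~\ref{th1} requires nothing about the radius function $r(\theta)$, such as measurability or uniform bounds, because its proof handles this with the elementary pigeonholing step producing $\EE^\ast$ with $\HH^1(\EE^\ast)>0$ in exterior measure. So the family $\{C_\theta\}_{\theta\in\EE}$ is admissible as it stands.

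\emph{Step 3: conclude.} Applying Theorem~\ref{th1} gives $\dim_H(K)\ge 2$. The bound is sharp, as the remark after Theorem~\ref{th1} shows: $S^2$ has dimension $2$ and contains a circle (a great circle) with every normal direction, though of course not every translate or radius. Thus even a set containing far fewer circles than the corollary assumes already meets the lower bound.

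The only point needing care is interpretive: making sure ``every spatial position'' includes at least all normal directions in one great circle of $S^2$. Any reasonable reading, whether all orientations or all orientations together with all positions, contains this, so no genuine obstacle remains. Only a one-parameter subfamily of directions is used, which shows how much weaker the hypothesis actually needed is.
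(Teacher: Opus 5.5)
Your proposal is correct and matches the paper's route: Corollary~\ref{co4} is stated there as an immediate consequence of Theorem~\ref{th1}, obtained by keeping only the circles whose normals lie on the equator $\EE$, and the pigeonholing of radii into $\EE^{\ast}$ is already handled inside that theorem. One aside does not affect your argument: the reading ``every radius and center'' is incompatible with $K$ being bounded, so ``every spatial position'' must mean every orientation, and under that reading $S^2$ itself satisfies the hypothesis and shows the corollary is sharp.
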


The method of the proof allows the extension of Corollary~\ref{co4} to other rectifiable plane curves $\Gamma$ under mild curvature conditions: a set containing a dilated copy of $\Gamma$ in every spatial position has Hausdorff dimension $\ge 2$.
But the non-rectifiable case, like for example the Von Koch snowflake, remains open.

One may naturally ask the same question in higher dimensions $(n\ge4)$, but the methods developed in this paper produce only the lower bound $\dim_H(K)\ge2$ for a set $K\subset\mathbb R^n$ containing a  circle in general position, while one expects the sharper estimate $\dim_H(K)\ge n-1$. The suprematistic approach finds here the same kind of difficulty that is confronted with in the Kakeya case, namely, the control of the overlapping of parallelepipeds in the metric of $L^p(\mathbb R^n)$, when $1<p<2$.


We can then observe that with this generality, Corollary~\ref{co4} fails in dimension $n=2$, for the following reason. Let $C$ be a Cantor set of points $x\in[0,1)$ such that the expansion $x=0,x_1x_2\cdots$ on base $4$ contains only the digits $\{0,3\}$. That is, we divide the interval $[0,1)$ into four subintervals,
\[
[0,1)=[0,1/4)\cup[1/4,1/2)\cup[1/2,3/4)\cup[3/4,1),
\]
remove the two placed at the center, and then repeat the same process with the other two and so on.

Let \[\tilde  C=\frac12 \,C=\Big\{\frac12\, x: x\in C\Big\},\] and place copies of $C$ and $\tilde  C$ at, respectively, the horizontal lines $y=0$ and $y=1$:
\[
\tilde C\subset \tilde E=\{(x,y):0\le x\le 1/2;\ y=1\}
\quad\text{and}\quad
C\subset E=\{(x,y):0\le x\le1;\ y=0\}.
\]
Let $B$ (a Besicovitch set) be the set of straight line segments joining a point of the top~$\tilde  C$ with another at the bottom $C$. We have (see \cite{Gu,Ka,St}) that $B$ is compact, has measure zero and contains a segment in each direction obtained joining a point of $\tilde  E$ with a point in~$E$. That is, covering angles (directions) $\theta$ in the interval $-1\le \tan(\theta)\le 2$. Therefore, the union of four conveniently rotated copies $B_j$ of $B$ is an example of a Kakeya--Besicovitch set in $\mathbb R^2$. If we denote by $C_j$ and $\tilde  C_j$ the corresponding Cantor sets, their union
\[
K:=\Big(\bigcup_{j=1}^4 C_j\Big)\cup\Big(\bigcup_{j=1}^4 \tilde  C_j\Big)
\]
has Hausdorff dimension $1/2$, and inside $K$ one can find, for each plane direction $\theta$, two points $z(\theta)$ and $ w(\theta)$ such that
\[
z(\theta)-w(\theta)=r(\theta)\theta,\quad \text{for some } r(\theta),\ 1\le r(\theta)\le2.
\]

In the other direction, we have the following.

\begin{prop}\label{pr5}
Let $K\subset \mathbb{R}^{2}$ be a bounded
set containing, for each $\theta\in S^{1}$, two points
$z(\theta)$ and $w(\theta)$ such that $z(\theta)-w(\theta)=r(\theta)\cdot\theta$, for
some $r(\theta)$, $1\leq r(\theta)\leq 2$. Then
\[\underline{\dim}_{\,B}(K)\geq \frac{1}{2}\cdot\]
\end{prop}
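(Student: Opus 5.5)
Fix $\varepsilon=2^{-k}$ and let $N=N_\varepsilon$ be the number of mesh cubes (squares) $Q\in\Gamma_k$ needed to cover $K$. The plan is to count pairs of occupied squares. Every difference $z(\theta)-w(\theta)$ with $z(\theta),w(\theta)\in K$ lies in a square $Q$ and a square $Q'$ from the covering. The vector $z-w$ then lies within distance $\lesssim 2^{-k}$ of the difference of the centers $c_Q-c_{Q'}$.

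The ordered pairs $(Q,Q')$ number at most $N^2$, so the set of difference vectors
\[
\{z(\theta)-w(\theta):\theta\in S^1\}\subset \bigcup_{(Q,Q')} B\big(c_Q-c_{Q'},\,C2^{-k}\big)
\]
is covered by at most $N^2$ discs of radius $\simeq 2^{-k}$. On the other hand, these vectors have the form $r(\theta)\theta$ with $1\le r(\theta)\le 2$, one for every direction $\theta$.

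A disc of radius $C2^{-k}$ contained in the annulus $1/2\le|x|\le 3$ subtends an angle $\lesssim 2^{-k}$ as seen from the origin. Hence its radial projection onto $S^1$ is an arc of length $\lesssim 2^{-k}$. Discs meeting the annulus $1\le |x|\le 2$ satisfy this for $k$ large, and the other discs can be discarded.

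Since every $\theta\in S^1$ must be hit, the projections of the $N^2$ discs cover $S^1$. This gives
\[
2\pi\le N^2\cdot C' 2^{-k},\qquad\text{so}\qquad N_{2^{-k}}\gtrsim 2^{k/2}.
\]
Consequently $\log N_\varepsilon/|\log\varepsilon|\ge \tfrac12 - O(1/k)$. Taking the $\liminf$, and passing from dyadic $\varepsilon$ to general $\varepsilon$ by monotonicity of $N_\varepsilon$ (which changes constants only), yields $\underline{\dim}_{\,B}(K)\ge 1/2$.

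The only point needing care is the bookkeeping. The difference of two points lying in squares of side $2^{-k}$ lies within $2\sqrt2\,2^{-k}$ of the difference of the centers. Also, the lower bound $r(\theta)\ge1$ is what guarantees that each disc controls only an $O(2^{-k})$ arc of directions. Without that bound, a disc near the origin could cover all directions at once, and the argument would fail.

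This is the main (mild) obstacle, and the hypothesis $1\le r(\theta)\le 2$ is exactly what resolves it. The Cantor example above shows that $1/2$ cannot be improved, even for Hausdorff dimension.
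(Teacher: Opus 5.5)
Your argument is correct, and it takes a different route from the paper's.

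\textbf{The paper's route.} The paper fixes $[\varepsilon^{-1}]$ directions $\theta_j$ with $|\theta_j-\theta_k|\simeq\varepsilon|j-k|$ and records, for each one, the pair of squares $Q^z(\theta_j),Q^w(\theta_j)$. It then selects squares greedily by the multiplicity $M(Q)$, the number of directions whose pair contains $Q$. It pigeonholes these multiplicities dyadically into $[2^{\nu-1},2^{\nu})$, which costs a factor $|\log\varepsilon|$. Finally it runs a Cauchy--Schwarz estimate against an overlap integral, bounds that integral in two ways ($M_\nu 2^{2\nu}\varepsilon^2$ and $M_\nu^2 2^{\nu}\varepsilon^2$), and takes the geometric mean. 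The outcome is $|E_\varepsilon|\gtrsim\varepsilon^{3/2}|\log\varepsilon|^{-2}$, that is, $N_\varepsilon\gtrsim\varepsilon^{-1/2}|\log\varepsilon|^{-2}$.

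\textbf{Your route.} You use directly the geometric fact underlying that machinery: because $|z-w|\ge 1$, one pair of $\varepsilon$-squares accounts for only an $O(\varepsilon)$-arc of directions. You combine this with the trivial bound $N_\varepsilon^2$ on the number of pairs. In other words, you cover the part of $K-K$ lying in the annulus by $\lesssim N_\varepsilon^2$ discs of radius $\simeq\varepsilon$, whose radial projections must exhaust $S^1$.

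\textbf{What each buys.} Your argument gives $N_\varepsilon\gtrsim\varepsilon^{-1/2}$ at every scale with no logarithmic loss, and it is much shorter. It also makes visible that only $r(\theta)\ge 1$ is used, not $r(\theta)\le 2$, and that a set of directions of positive measure would suffice. It is the rigorous, scale-by-scale version of the heuristic $\dim(K-K)\le 2\dim K$ that the paper states after the proof only under the extra assumption $\dim_H K=\dim_B K$. For lower box dimension no such assumption is needed, since $N_\varepsilon(K-K)\lesssim N_\varepsilon(K)^2$ holds at each fixed $\varepsilon$. The paper's $L^2$ route buys only uniformity with the Cauchy--Schwarz/overlap method used elsewhere in the paper. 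For this particular statement it gives nothing beyond your counting argument.
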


\begin{proof}
Let $E_{\varepsilon}=\{Q_{\sigma}\}$ be a covering of $K$
by squares of a mesh $\Gamma_{\varepsilon}$ in $\mathbb{R}^{2}$ of
width $\varepsilon$, i.e., $\ell(Q_{\sigma})=\varepsilon$, $K\subset\bigcup Q_{\sigma}$,
and define $N_{\varepsilon}=\operatorname{card}(E_{\varepsilon})$.

\vskip1pt
First we fix a set of directions
\[
A_{\varepsilon}=\{\theta_{j}\}_{j=1,2,\dots,[\varepsilon^{-1}]}\subset S^{1}_{+}
\]
so that
\[
|\theta_{j}-\theta_{k}|\simeq \varepsilon |j-k|.
\]

Let us also define
\[
z(\theta_{j})\in Q^{z}(\theta_{j})\in E_{\varepsilon}\quad\text{and}\quad
w(\theta_{j})\in Q^{w}(\theta_{j})\in E_{\varepsilon},
\]
 with
\[
z(\theta_{j})-w(\theta_{j})=r(\theta_{j})\cdot\theta_{j},\quad
1\leq r(\theta_{j})\leq 2.
\]
Let
\begin{align*}
D_{\theta_{j}}&=Q^{z}(\theta_{j})\cup Q^{w}(\theta_{j}),
\\
\mathcal{F}&= \{ \{Q^{z}(\theta_{j}),Q^{w}(\theta_{j}) \}:1\leq j\leq [\varepsilon^{-1}] \}.
\\
\mathcal{R}(Q)&= \{Q_{\sigma}\in E_{\varepsilon}:\{Q,Q_{\sigma}\}\in\mathcal{F} \}.
\\
M(Q)&=\operatorname{Card}\{\mathcal{R}(Q)\}=\text{number of directions }
\theta_{j}\in A_{\varepsilon}\text{ so that }D(\theta_{j})\supset Q.
\end{align*}

Next let us choose $Q_{1}$ such that $M(Q_{1})$ is maximum. Having chosen
$Q_{1},\dots,Q_{\tau}$, we pick~$Q_{\tau +1}$ with the same criterion,
namely, $M(Q_{\tau +1})$ is maximum in
$E_{\varepsilon}\setminus \{Q_{1},\dots,Q_{\tau}\}$. We continue this
selection process until all the directions in $A_{\varepsilon}$ are obtained
within the sets
\[
\{Q_{1},\dots,Q_{T}\}\cup\bigcup_{j=1}^{T}\{Q_{\sigma}:Q_{\sigma}\in\mathcal{R}(Q_{j})\}.
\]
Obviously, we have
\[
\sum M(Q_{j})=[\varepsilon^{-1}],
\]
and from this we conclude the existence of
$\nu\in \{1,2,\dots,[|\log\varepsilon|]\}$ such that
\[
\varepsilon^{-1}\geq M_{\nu}\,2^{\nu}\geq \frac{1}{|\log\varepsilon|}\,\varepsilon^{-1},
\]
where
\[
M_{\nu}=\operatorname{Card}\{Q_{j}:2^{\nu-1}\leq M(Q_{j})<2^{\nu}\}.
\]

Let us denote by $\tilde {Q}_{1},\dots,\tilde {Q}_{M_{\nu}}$ those
squares in the family $\{Q_{1},\dots,Q_{T}\}$ satisfying the estimate
\[
2^{\nu-1}\leq \operatorname{Card}(\mathcal{R}(\tilde {Q}_{j}))<2^{\nu}.
\]

We have
\[
\sum_{j=1}^{M_{\nu}}
\Big(\sum_{Q\in\mathcal{R}(\tilde {Q}_{j})}|Q|\Big)
\gtrsim M_{\nu}\,2^{\nu}\varepsilon^{2}
\gtrsim \frac{\varepsilon}{|\log\varepsilon|},
\]
and
\begin{align*}
\sum_{j=1}^{M_{\nu}}
\Big(\sum_{Q\in\mathcal{R}(\tilde {Q}_{j})}|Q|\Big)
&=
\int\Big[
\sum_{j=1}^{M_{\nu}}
\sum_{Q\in\mathcal{R}(\tilde {Q}_{j})}
\chi_{Q}
\Big]
\leq |E_{\varepsilon}|^{1/2}\cdot
\Big[
\sum_{j=1}^{M_{\nu}}
\sum_{k=1}^{M_{\nu}}
\int
\sum_{\substack{Q^{1}\in\mathcal{R}(\tilde {Q}_{j})\\ Q^{2}\in\mathcal{R}(\tilde {Q}_{k})}}
\chi_{Q^{1}}\chi_{Q^{2}}
\Big]^{1/2}.
\end{align*}

Observe that for each $Q^{1}\in\mathcal{R}(\tilde {Q}_{j})$, there are at most
$2^{\nu+1}$ squares in $ \bigcup_{k=1}^{M_{\nu}}\mathcal{R}(\tilde {Q}_{k})$
intersecting (in fact coinciding) with $Q^{1}$, because otherwise $Q^{1}$
will have been one of the selected $\tilde {Q}_{j}$. Therefore we obtain
for the integral above the estimate
\[
\lesssim M_{\nu}\,2^{2\nu}\varepsilon^{2},
\]
Since we also have the estimate
\[
\lesssim M_{\nu}^{2}\,2^{\nu}\varepsilon^{2}
\]
for that integral, taking the geometric mean, we get
\[
\frac{\varepsilon}{|\log\varepsilon|}
\lesssim |E_{\varepsilon}|^{1/2}
 \,[(M_{\nu}\,2^{\nu})^{3/2}\varepsilon^{2} ]^{1/2}
\]
and we use now the fact $M_{\nu} 2^{\nu}\leq \varepsilon^{-1}$ to obtain
\[
|E_{\varepsilon}|\gtrsim \frac{\varepsilon^{3/2}}{|\log\varepsilon|^{2}},
\]
implying that
\[
\underline{\dim}_{\,B}(K)
\geq 2-\limsup_{\varepsilon\to 0}
\Big|\frac{\log |E_{\varepsilon}|}{\log\varepsilon}\Big|
\geq \frac12\cdot\qedhere
\]
\end{proof}

It remains, nevertheless, to decide the intriguing question about the dimension of $K\subset\mathbb R^2$ when we fix the distance $r(\theta)=1$. That is, when the set of differences $K-K$ contains a circle of radius $1$.

Assuming that $s=\dim_H(K)=\dim_B(K)=$ box counting dimension, we have
\[
\min(2s,2)\ge \dim_H(K-K)\ge \max(s,1),
\]
where the lower bound is obvious, while the upper bound comes from the fact that the map $(x,y)\to x-y$ is Lipschitz from $K\times K\subset\mathbb R^4$ into $\mathbb R^2$, implying that
\[
\dim_H(K-K)\le \dim_H(K\times K)\le 2s,
\]
from which we deduce that $s\ge1/2$, but one may expect in this case the bound $s\geq 1$.

There is, nevertheless, an extension of Theorem~\ref{th1} to the case $n\ge4$ which is also sharp. Given a sphere $S^{d-1}$ of radius $1$ in the Euclidean space $\R^n$, $n> d\geq 2$, where we have fixed the poles $N$ and $S$, let us call ``meridian'' the $S^{d-2}$-sphere obtained as the intersection of~$S^{d-1}$ with a $(d-1)$-plane $H$ passing by the poles, and assume that we have a coordinate system in~$\R^n$ such that
\[
S^{d-1}=\Big\{(x_1,\ldots,x_d,0,\ldots,0):\sum_{j=1}^d x_j^2=1\Big\},
\]
\[
N=(0,\ldots,1,0,\ldots,0),\quad
S=(0,\ldots,-1,0,\ldots,0),
\]
\[
\EE=S^{d-1}\cap\{x_d=0\},
\quad
\EE^+=\EE\cap\{x_1\ge0\}.
\]
Each meridian passing by the poles is the intersection of a $(d-1)$-plane $H$ with $S^{d-1}$ and it has a unique normal $\nu\in\EE^+$. Therefore we can parametrize that meridian family by those normal vectors: $\{C_\nu\}_{\nu\in\EE^+}$.

\begin{theorem}\label{th6}Given $n\ge4$, $3\le d\le n$, let $K\subset\R^n$ be a bounded set containing a translated and dilated copy of every $(d-2)$-sphere meridian passing by the fixed poles of a $S^{d-1}$ sphere. Then the Hausdorff dimension of\/ $K$ must satisfy
\[
\dim_H(K)\ge d-1.
\]
\end{theorem}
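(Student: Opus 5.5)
The plan is to copy the proof of Theorem~\ref{th1} with the one-parameter family of directions replaced by the $(d-2)$-dimensional family $\EE^+\simeq$ half of $S^{d-2}$. Each meridian $C_\nu$ is a $(d-2)$-sphere, so its Minkowski sausage at scale $2^{-k}$ has volume $\simeq 2^{-k(n-d+2)}$. We fix a dyadic covering $\{Q_\sigma\}$ of $K$ and define $E^k$, $N_k$ as before. We then aim for the analogue of Claim~\ref{cl2}: for every $\alpha<d-1$ there is $k\ge k_0$ with $N_k2^{-k\alpha}\ge\lambda_\alpha$.

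\emph{Step 1: the analogue of Lemma~\ref{le3}.} A pigeonholing argument gives a set $\EE^{\ast}\subset\EE^+$ with $\HH^{d-2}(\EE^\ast)>0$ on which the dilation factors lie in $[c,2c]$. On each translated and dilated meridian we fix a fixed-proportion piece $C^1_\nu$ that stays away from the poles. Lemma~\ref{le3}, with $\HH^1$ replaced by $\HH^{d-2}$, then gives, for each $\nu\in\EE^\ast$, some $k$ with $\HH^{d-2}(C^1_\nu\cap E^k)\ge\lambda_\beta2^{-k\beta}$.

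\emph{Step 2: choosing directions.} As before, some $\Omega_k$ has $\HH^{d-2}(\Omega_k)\ge\lambda_\beta2^{-k\beta}$. We choose a $2^{-k}$-separated family $\{\nu_j\}\subset\Omega_k$ of cardinality $T\simeq\lambda_\beta2^{k((d-2)-\beta)}$. The sausages $D_{\nu_j}$ around $C^1_{\nu_j}$ satisfy, by the same maximal-function comparison,
\[
|D_{\nu_j}\cap E^k|\gtrsim\lambda_\beta2^{-k\beta}2^{-k(n-d+2)},
\qquad\text{hence}\qquad
\sum_j|D_{\nu_j}\cap E^k|\gtrsim\lambda_\beta^2 2^{-k(n-2d+4)}2^{-2k\beta}.
\]

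\emph{Step 3: the overlap estimate (main obstacle).} Cauchy--Schwarz gives
\[
\Big(\sum_j|D_{\nu_j}\cap E^k|\Big)^2\le|E^k|\sum_{i,j}|D_{\nu_i}\cap D_{\nu_j}|,
\]
so we need $|D_{\nu_i}\cap D_{\nu_j}|$. Two meridians lie in $(d-1)$-planes $H_i$, $H_j$ with normals $\nu_i,\nu_j$ at angle $\phi\simeq|\nu_i-\nu_j|$. Their intersection $H_i\cap H_j$ is a $(d-2)$-plane through the poles, and it meets the meridian in a $(d-3)$-sphere. Away from the poles the two sausages therefore meet in a $2^{-k}/\phi$-neighbourhood of that $(d-3)$-sphere, thickened by $2^{-k}$ in the remaining $n-d+2$ directions. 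This gives the volume bound
\[
|D_{\nu_i}\cap D_{\nu_j}|\lesssim\frac{2^{-k(n-d+2)}}{1+2^{k}\phi}.
\]
This is the key trigonometric computation to verify. The delicate points are transversality of the meridians within $\R^d$, and the fact that the extra $n-d$ normal directions contribute only the factor $2^{-k(n-d)}$, since the meridians are coplanar in $\R^d$ up to translation. Translations of the copies do not matter here, because the bound is scale-local and the overlap volume is maximised when the copies are aligned.

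\emph{Step 4: summing.} With $2^{-k}$-separated points on the $(d-2)$-dimensional parameter set,
\[
\sup_i\sum_j\frac{1}{1+2^k|\nu_i-\nu_j|}\lesssim\sum_{m\le2^k}m^{d-3}\cdot\frac1m\lesssim 2^{k(d-3)}
\]
for $d\ge4$, and $\lesssim k$ for $d=3$; this is the ``small difference'' between the cases. Hence
\[
\sum_{i,j}|D_{\nu_i}\cap D_{\nu_j}|\lesssim k\,T\,2^{k(d-3)}2^{-k(n-d+2)}.
\]
Plugging in and using $|E^k|=N_k2^{-kn}$ gives
\[
N_k2^{-kn}\gtrsim\frac{\lambda_\beta^3}{k}\,2^{-k(n-2d+4)\cdot 2}\,2^{-4k\beta}\,2^{k(n-d+2)}2^{-k(d-3)}2^{-k(d-2)+k\beta},
\]
which simplifies to $N_k\gtrsim k^{-1}\lambda_\beta^3\,2^{k(d-1)-3k\beta}$. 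Choosing $\beta$ small relative to $d-1-\alpha$ then gives Claim~\ref{cl2} with $d-1$ in place of $2$, and hence $\dim_H(K)\ge d-1$.
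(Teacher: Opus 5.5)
Your outline follows the paper's proof step for step: pigeonholing to some $\Omega_k$ with $\HH^{d-2}(\Omega_k)\ge\lambda_\beta2^{-k\beta}$, a $2^{-k}$-separated family of size $\simeq\lambda_\beta2^{k(d-2-\beta)}$, Cauchy--Schwarz, the pairwise bound $2^{-k(n-d+2)}/(1+2^k|\nu-\mu|)$, and a sum $\lesssim2^{k(d-3)}$ (or $\lesssim k$ for $d=3$). Your exponent bookkeeping in Step~4 is also correct.

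The gap is the sentence you offer for the crux of Step~3. It is not true in general that the overlap is largest for aligned copies, and translations are exactly where the pairwise bound can fail. Take $d=3$ and, for each $\nu$, the unit circle in the vertical plane through a fixed point $p_0$ with normal $\nu$, centred at $p_0+e_\nu$. Here $e_\nu\perp\nu$ is horizontal and depends continuously on $\nu$; these circles sweep out a horn torus. Every circle has $p_0$ as an equatorial point with vertical tangent. So $p_0$ lies in the paper's piece $\{0\le x_d-(\text{centre})_d\le\frac12\}$, and in any piece that merely avoids the poles but reaches the equator. Near $p_0$, the points of the two circles at height $z$ are at distance $\approx\frac{z^2}{2}|e_\nu-e_\mu|$. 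Hence $D_\nu\cap D_\mu$ contains a tube of radius $2^{-k-1}$ around an arc of length $\simeq(2^{-k}/\phi)^{1/2}$, giving $|D_\nu\cap D_\mu|\gtrsim2^{-k(n-1)}(2^{-k}/\phi)^{1/2}$. That exceeds your bound by a factor $(2^k\phi)^{1/2}$. With all pairs in this position, Cauchy--Schwarz only yields $N_k\gtrsim2^{3k/2}$ (up to $\beta$-losses). For $d=3$, alignment is in fact the best case: aligned meridians have disjoint sausages on pieces away from the poles once $\phi\gg2^{-k}$. The paper does not prove this bound either (it cites ``elementary geometrical considerations''), and its piece $C_\nu\cap\{0\le x_d\le\frac12\}$ has the same defect when $d=3$.

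What actually proves the bound is a slab estimate. If $x\in D_\nu\cap D_\mu$, then $x$ is within $2^{-k}$ of a point $y\in\tilde C^1_\nu$ with $\dist(y,A_\mu)\le2^{1-k}$, where $A_\mu$ is the affine $(d-1)$-plane containing $\tilde C_\mu$. If $A_\nu$ and $A_\mu$ lie in different parallel translates of $\R^d$, this set of $y$ only shrinks or becomes empty. Otherwise it is $\tilde C^1_\nu$ cut by a slab of $A_\nu$ of width $\simeq2^{-k}/\phi$ about the $(d-2)$-plane $A_\nu\cap A_\mu$, whose normal within $A_\nu$ is orthogonal to the polar axis.

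For $d\ge4$ this works for every translation and dilation. The projection of surface measure on a round $m$-sphere onto a line has density proportional to $(1-s^2)^{(m-2)/2}$, which is bounded when $m=d-2\ge2$. So every slab of width $t\ge2^{-k}$ carries $\HH^{d-2}$-measure $\lesssim t$, and $|D_\nu\cap D_\mu|\lesssim2^{-k(n-d+2)}/(1+2^k\phi)$.

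For $d=3$ the density blows up precisely at the points with vertical tangent, i.e.\ the equatorial points. You must therefore take the piece bounded away from the equator, e.g.\ the image of $C_\nu\cap\{\frac14\le x_d\le\frac34\}$. There the horizontal coordinate has derivative bounded below along the arc, so any such slab meets the piece in length $\lesssim2^{-k}/\phi$. The pigeonholing of Lemma~\ref{le7} is unaffected, since the piece is still a fixed proportion of $C_\nu$, and the rest of your argument goes through. Staying away from the poles plays no role in this estimate.
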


\textit{Proof.} Let $\{Q_\sigma\}$ be a covering of $K$ by dyadic cubes of side length
$
\ell(Q_\sigma)\le2^{-k_0}
$ such that
\[
K\subset\bigcup_\sigma Q_\sigma=\bigcup_{k\ge k_0}\Big(\bigcup_\sigma Q_\sigma^k\Big),
\]
where $Q_\sigma^k\in\Gamma_k$, $\ell(Q_\sigma^k)=2^{-k}$ and $\Gamma_k$ is a mesh in $\R^n$ of width $2^{-k}$.

Defining $E^k=\bigcup_\sigma Q_\sigma^k$, $N_k=\card\{Q_\sigma^k\}$ as in Theorem~\ref{th1}, the estimate for the Hausdorff dimension $(\dim_H K)$ will be a consequence of the following fact:

For every  $\alpha<d-1$, there exists $\lambda_\alpha >0$ such that for every covering as above there exists  $k\ge k_0$ such that $N_k\,2^{-k\alpha}\ge\lambda_\alpha>0$.

As in the proof of Theorem~\ref{th1}, we need a preliminary argument to reduce the general case to the particular situation where the radius
 is between two fixed values $c\leq r\leq 2c$. In order to simplify the presentation, in the following we shall assume that we have $1\leq r \leq 2$, leaving 
 the reader to complete the needed details.

Let us define
\[
C_\nu^1=C_\nu\cap\Big\{0\le x_d\le\frac12\Big\}
\]
and observe that
\[
\HH^{d-2}(C_\nu^1)= c_d\,\HH^{d-2}(C_\nu),
\]
where $0<c_d<1$ is a universal constant.

By hypothesis, $K$ contains translated copies $\tilde  C_\nu$ and $\tilde  C_\nu^1$ of $C_\nu$ and $C_\nu^1$, respectively.

\begin{lemma}\label{le7}For each $\beta>0$, there exist $\lambda_\beta>0$ and $k\ge k_0$ such that
\[
\HH^{d-2}(\tilde  C_\nu^1\cap E^k)\ge\lambda_\beta\,2^{-k\beta}.
\]
\end{lemma}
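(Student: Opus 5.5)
The plan is to copy the pigeonholing argument of Lemma~\ref{le3}, replacing $\HH^1$ with $\HH^{d-2}$ and the octant with the cap $\tilde C_\nu^1$. Fix $\nu\in\EE^+$. Since $\tilde C_\nu$ is a translated and dilated copy of the meridian $C_\nu$ with radius $1\le r\le 2$, it is contained in $K\subset\bigcup_{k\ge k_0}E^k$. Countable subadditivity of $\HH^{d-2}$ then gives
\[
\HH^{d-2}(\tilde C_\nu)\le\sum_{k\ge k_0}\HH^{d-2}(\tilde C_\nu^1\cap E^k)+\sum_{k\ge k_0}\HH^{d-2}\big((\tilde C_\nu\setminus\tilde C_\nu^1)\cap E^k\big).
\]
The first sum is the important piece, and the second is the complementary part.

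First, I need an upper bound on the complementary part. The sets $E^k$ may overlap, so I would not treat the decomposition as exact, as the paper does in Lemma~\ref{le3}. Instead I would make the union disjoint by replacing $E^k$ with $E^k\setminus\bigcup_{k_0\le m<k}E^m$. This changes nothing needed later, because the lemma only requires a lower bound on $\HH^{d-2}(\tilde C_\nu^1\cap E^k)$, and shrinking $E^k$ only makes that bound harder to obtain. With disjoint pieces, the second sum equals $\HH^{d-2}(\tilde C_\nu\setminus\tilde C_\nu^1)=(1-c_d)\HH^{d-2}(\tilde C_\nu)$. This uses the observation $\HH^{d-2}(C_\nu^1)=c_d\HH^{d-2}(C_\nu)$ together with the invariance of $\HH^{d-2}$ under translations and its scaling under dilations, both recalled in the Remark.

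Second, I argue by contradiction. Suppose $\HH^{d-2}(\tilde C_\nu^1\cap E^k)<\lambda_\beta 2^{-k\beta}$ for every $k\ge k_0$. Then
\[
\HH^{d-2}(\tilde C_\nu)\le\lambda_\beta\frac{2^{-k_0\beta}}{1-2^{-\beta}}+(1-c_d)\HH^{d-2}(\tilde C_\nu).
\]
Rearranging gives $c_d\,\HH^{d-2}(\tilde C_\nu)\le\lambda_\beta(1-2^{-\beta})^{-1}$, since $2^{-k_0\beta}\le 1$. The left side is at least $c_d\,\omega_{d-2}$, where $\omega_{d-2}$ is the measure of the unit $(d-2)$-sphere, because $r\ge1$. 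So the inequality fails as soon as $\lambda_\beta<c_d\,\omega_{d-2}(1-2^{-\beta})$. This choice of $\lambda_\beta$ depends only on $\beta$ and $d$. It does not depend on $\nu$ or on the covering, and that uniformity is what the subsequent argument over the sets $\Omega_k$ needs. Hence for each $\nu$ some $k\ge k_0$ satisfies the desired bound, though that $k$ may depend on $\nu$, exactly as in Lemma~\ref{le3}.

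I do not expect a serious obstacle here. The only delicate points are the disjointification and keeping $\lambda_\beta$ independent of $\nu$ by using the lower bound $r\ge1$. In the general case $c\le r\le2c$, one simply replaces $\omega_{d-2}$ by $c^{d-2}\omega_{d-2}$.
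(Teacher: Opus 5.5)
Your proposal is correct and is essentially the paper's own proof: you split $\HH^{d-2}(\tilde C_\nu)$ by subadditivity into the cap $\tilde C_\nu^1$ and its complement, bound the complementary sum by $(1-c_d)\HH^{d-2}(\tilde C_\nu)$, and derive a contradiction for small $\lambda_\beta$. Your disjointification $E^k\setminus\bigcup_{k_0\le m<k}E^m$ and your explicit $\nu$-independent threshold $\lambda_\beta<c_d\,\omega_{d-2}(1-2^{-\beta})$ (from $r\ge 1$) make rigorous two points the paper leaves implicit, and the first is genuinely needed for the bound on the complementary sum when the $E^k$ overlap.
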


\textit{Proof.} We have
\[
\HH^{d-2}(\tilde  C_\nu)\le
\sum_{k\ge k_0}\HH^{d-2}(\tilde  C_\nu^1\cap E^k)
+\sum_{k\ge k_0}\HH^{d-2}\bigl((\tilde  C_\nu-\tilde  C_\nu^1)\cap E^k\bigr).
\]
Therefore the hypothesis
\[
\HH^{d-2}(\tilde  C_\nu^1\cap E^k)\le\lambda_\beta\,2^{-k\beta},
\quad\text{for every } k\ge k_0,
\]
implies the inequality
\[
\HH^{d-2}(\tilde  C_\nu)\le
\lambda_\beta\,\frac{2^{-k_0\beta}}{1-2^{-\beta}}+(1-c_d)\,\HH^{d-2}(\tilde  C_\nu),
\]
which is obviously false for $\lambda_\beta$ small enough.

\vskip1pt
Next, for each $k\ge k_0$, let us define
\[
\Omega_k=\{\nu\in\EE^+:\HH^{d-2}(\tilde  C_\nu^1\cap E^k)\ge\lambda_\beta\,2^{-k\beta}\},
\]
and observe that Lemma~\ref{le7} implies that
\[
\EE^+\subset\bigcup_{k\ge k_0}\Omega_k,
\]
because for every $\nu\in\EE^+$, there is $k\ge k_0$ satisfying $\nu\in\Omega_k$. Therefore
\[
\HH^{d-2}(\EE^+)\le\sum_{k\ge k_0}\HH^{d-2}(\Omega_k),
\]
and this implies that for a certain $k\ge k_0$, we must have
\[
\HH^{d-2}(\Omega_k)\ge\lambda_\beta\,2^{-k\beta}
\]
for a convenient $\lambda_\beta$, that we can always take to be smaller than the previously chosen constant $\lambda_\beta$.

From that estimate we can infer that $\Omega_k$ contains a set of points $A=\{\nu\}$ of cardinal
\[
\simeq \lambda_\beta\,2^{-k\beta}\,2^{k(d-2)}
\]
separated by a distance bigger than $2^{-k}$. That is, if $\nu$ and $\mu$ are different elements of $A$, we have
\[
\|\nu-\mu\|\ge2^{-k}\quad\text{or}\quad \|2^k\nu-2^k\mu\|\ge1.
\]
To each selected direction $\nu\in A$, we associate the Minkowski sausage in $\R^n$,
\[
D_\nu=\{x:\dist(x,\tilde  C_\nu^1)\le2^{-k}\}.
\]
Then we have the estimate
\[
\sum_{\nu\in A}\HH^n(D_\nu\cap E^k)
\gtrsim \lambda_\beta^2\, 2^{-k\beta}\,2^{-k(n-2d+4)}.
\]
And observe that
\begin{align*}
\sum_{\nu\in A}\HH^n(D_\nu\cap E^k)
&=\int\Big[\sum_{\nu\in A}\chi_{D_\nu\cap E^k}\Big]
\le\Big[\HH^n\Big(\Big(\bigcup_\nu D_\nu\Big)\cap E^k\Big)\Big]^{1/2}
\cdot\Big[\sum_{\nu,\mu\in A}\HH^n(D_\nu\cap D_\mu)\Big]^{1/2}
\end{align*}
by H\"{o}lder's inequality.

We may now assume here that $n\ge4$, because the case $n=3$, slightly different, was covered by Theorem~\ref{th1}. Elementary geometrical considerations yield the estimate
\[
\sum_{\nu,\mu\in A}\HH^n(D_\nu\cap D_\mu)
\lesssim\sum_{\nu,\mu\in A}\frac{2^{-k(n-d+2)}}{|2^k\nu-2^k\mu|+1}
\le \card(A)\sup_\mu\sum_\nu\frac{2^{-k(n-d+2)}}{|2^k\nu-2^k\mu|+1}.
\]
Since the distance between two points in $A$ is always greater than $2^{-k}$, using polar coordinates in $\R^{d-2}$, we can estimate the sum above by the integral
\[
\int_0^M r^{d-4}\,dr,
\quad \text{where } M=[\lambda_\beta\,2^{-k\beta}]^{1/(d-2)}\,2^k,
\]
and obtain the estimate
\[
\HH^n\Big( \Big(\bigcup D_\nu \Big)\cap E^k \Big)
\gtrsim \lambda_\beta^2 \,2^{-3k\beta}\,2^{-k(n-d+1)},
\]
implying
\[
N_k\,2^{-k\alpha}\gtrsim \lambda_\beta^2\, 2^{k(d-1-\alpha-3\beta)}\gtrsim\lambda_\alpha>0
\]
if $\beta$ is chosen small enough.

\section{Segments}

In $\R^n$, $n\ge2$, we have two types of Kakeya maximal functions, namely there is one where the averages are taken over ``needles'' or tubes, and another where the geometry is that of ``coins''. There are, of course, the intermediate cases, but here we will consider only needles or coins. More precisely, given a number $N>1$, the eccentricity, we have \mbox{$N$-needles} (tubes or cylinders of radius $r$ and height $rN$) and $N$-coins (cylinders of height~$h$ and radius $hN/2$), but in both cases their orientation will be allowed to be arbitrary in~$\R^n$.

\begin{figure}[ht]\label{fig:needle-coin}

\centering\resizebox{9cm}{!}{
\begin{tikzpicture}[line join=round, line cap=round]

  \begin{scope}[rotate=-35]
    \draw[dashed, black!50, thick] (0.15,0) arc (0:180:0.15cm and 0.06cm);

    \fill[gray!10, opacity=0.6] (-0.15,0) -- (-0.15,8) arc (180:360:0.15cm and 0.06cm) -- (0.15,0) arc (360:180:0.15cm and 0.06cm);

    \draw[thick] (-0.15,0) arc (180:360:0.15cm and 0.06cm);

    \draw[thick] (-0.15,0) -- (-0.15,8) (0.15,0) -- (0.15,8);

    \node at (0.5,4.1) {\footnotesize $rN$};
    \node at (0.35,-0.05) {\footnotesize$ r$};

    \draw[thick] (0,0) -- (0.15,0);

    \draw[thick, fill=gray!30] (0,8) ellipse (0.15cm and 0.06cm);
  \end{scope}

  \begin{scope}[xshift=8cm, yshift=2cm, rotate=-25]
    \draw[dashed, black!50, thick] (3.2,0) arc (0:180:3.2cm and 1.15cm);

    \fill[gray!10, opacity=0.6] (-3.2,0) -- (-3.2,0.3) arc (180:360:3.2cm and 1.15cm) -- (3.2,0) arc (360:180:3.2cm and 1.15cm);

    \draw[thick] (-3.2,0) arc (180:360:3.2cm and 1.15cm);

    \draw[thick] (-3.2,0) -- (-3.2,0.3) (3.2,0) -- (3.2,0.3);

    \draw[thick, fill=gray!30] (0,0.3) ellipse (3.2cm and 1.15cm);

    \draw[<->, >=stealth, thick] (3.45,-0.05) -- (3.45,0.35)
      node[midway, right, black] {\footnotesize $h$};

    \draw[thick] (0,0.3) -- (3.2,0.3)
      node[midway, above, black] {\footnotesize $hN/2$};
  \end{scope}

\end{tikzpicture}

}
\caption{A needle and a coin.}
\end{figure}

Needles or coins provide differentiation bases, $\BB_N^{\textup{needle}}$ and $\BB_N^{\textup{coin}}$, which are invariant under translations, dilations and rotations, and have associated maximal functions
\[
M_N^{\textup{needle}}f(x):=\sup \frac{1}{\mu(R)}\int_R |f(y)|\,dy,
\]
where the sup is taken over all elements $R$ of $\BB_N^{\textup{needle}}$ containing $x$, and
\[
M_N^{\textup{coin}}f(x):=\sup \frac{1}{\mu(R)}\int_R |f(y)|\,dy,
\]
where the sup is taken now over all elements of $\BB_N^{\textup{coin}}$ containing $x$.

Obviously, in dimension $n=2$ we have
\[
M_N^{\textup{coin}}=M_N^{\textup{needle}},
\]
but for $n\ge3$ they have very different behavior in term of $L^p$-boundedness. The main result of reference \cite{Co1} is the following.

\begin{theorem}\label{th8}For every $n\ge2$, there exist finite constants $C(n)$, $a(n)$, such that
\[
\|M_N^{\textup{coin}}f\|_{L^2(\R^n)}
\le C(n)[\log(N+1)]^{a(n)}\|f\|_{L^2(\R^n)}.
\]
\end{theorem}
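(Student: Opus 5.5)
We will prove Theorem~\ref{th8} through the $L^2$ bound for the dual (adjoint) operator, using the Fourier transform: this is the one exponent where orthogonality can be exploited. First, reduce to a fixed scale and finitely many orientations. By dilation invariance and the usual comparison of averages, we may replace $M_N^{\textup{coin}}$ (up to a constant and a logarithmic loss from dyadic scales, which we handle separately) by the operator
\[
\sup_{\omega\in\Omega_N}\sup_{h>0}\,|f|*\phi_{h,\omega},
\]
where $\Omega_N$ is an $N^{-1}$-net of $S^{n-1}$, of cardinality $\simeq N^{n-1}$, and $\phi_{h,\omega}$ is a smooth, nonnegative, $L^1$-normalized bump adapted to the coin of thickness $h$, radius $hN$, and normal $\omega$. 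Since a coin's normal need only be determined to accuracy $1/N$, this discretization costs only a constant factor.

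Second, decompose in frequency. The Fourier transform $\widehat{\phi_{h,\omega}}(\xi)$ is essentially $1$ on the region $|\xi\cdot\omega|\lesssim h^{-1}$, $|\xi|\lesssim (hN)^{-1}$, and decays rapidly in the $\omega$ direction beyond $h^{-1}$. Crucially, the dual object of a coin is a \emph{needle} in frequency space, elongated along $\omega$. We take a Littlewood--Paley decomposition $f=\sum_j \Delta_j f$ in $|\xi|\sim 2^j$, together with an angular decomposition of each dyadic annulus into sectors of aperture $\simeq 1/N$. At a fixed frequency scale $2^j$, the coin at scale $h\sim 2^{-j}$ with normal $\omega$ sees essentially only the sector around $\pm\omega$, because the radial extent $hN$ kills the transverse frequencies $\gtrsim 2^{j}/N$. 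Consequently the operator splits into a "low" part, $|\xi|\lesssim (hN)^{-1}$, where the average is dominated by the ordinary Hardy--Littlewood maximal function (bounded on $L^2$), and a "sector" part. For the sector part we bound, for each $\omega$,
\[
\sup_h |f*\phi_{h,\omega}-\text{low part}| \le \Big(\sum_j |\Delta_{j,\omega} f * \phi_{2^{-j},\omega}|^2\Big)^{1/2}
\]
plus rapidly decaying tails. Taking the supremum over $\omega\in\Omega_N$ is then controlled by summing over the $\simeq N^{n-1}$ sectors. Since the sectors are essentially disjoint in frequency, Plancherel gives $\sum_\omega\sum_j\|\Delta_{j,\omega}f\|_2^2\lesssim \|f\|_2^2$.

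Third, and this is the main obstacle, we must handle the overlap of the tails: $\widehat{\phi_{h,\omega}}$ is not exactly supported in a sector, and neighbouring scales and orientations interact. We estimate the tail contributions by a Cotlar-type almost-orthogonality argument, decomposing $\phi_{h,\omega}$ into pieces whose Fourier supports lie in sectors at angular distance $\sim 2^m/N$ from $\omega$, for $m=0,1,\dots,\log N$. Each piece gains decay $2^{-mL}$ but involves $2^{m(n-1)}$ neighbouring sectors. Controlling the maximal function over the dyadic range of such pieces, which we do via a square function in the scale parameter, costs a factor of $\log N$ per layer. Summing over $m$ and over these logarithmic layers yields the bound $C(n)[\log(N+1)]^{a(n)}$. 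The delicate point is ensuring that the supremum over $h$ within each sector costs only a polylogarithm. For this we would first try the standard trick of controlling $\sup_h$ by $\|F\|_2^{1/2}\|\partial_h F\|_2^{1/2}$ (Sobolev in $\log h$), which keeps all losses logarithmic in $N$.
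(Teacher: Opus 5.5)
There is a genuine gap in your second step: it discards the main term. You say, correctly, that the dual of a coin is a needle in frequency. But you then keep only the two ends of that needle: the ball $|\xi|\lesssim(hN)^{-1}$, and the tip $|\xi|\sim h^{-1}$, where the needle is a sector of aperture $1/N$. (The essential support you wrote should read $|\xi\cdot\omega|\lesssim h^{-1}$, $|\xi_\perp|\lesssim(hN)^{-1}$.)

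On the intermediate annuli $|\xi|\sim 2^{-m}h^{-1}$, $0<m<\log_2 N$, the multiplier $\widehat{\phi_{h,\omega}}$ is still $\approx 1$ on a whole sector of aperture $\sim 2^m/N$ around $\pm\omega$. These pieces are not tails. The decay $2^{-mL}$ you invoke in the third step exists only for $|\xi|\gg h^{-1}$. Hence the pointwise bound $\sup_h|f*\phi_{h,\omega}-\text{low part}|\le\bigl(\sum_j|\Delta_{j,\omega}f*\phi_{2^{-j},\omega}|^2\bigr)^{1/2}$ plus tails is false.

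A sanity check shows something must break. Steps one and two as written (Hardy--Littlewood for the low part, Plancherel over essentially disjoint $1/N$-sectors, summable tails) would give a bound \emph{independent of $N$}. But $\|M_N^{\textup{coin}}\|_{L^2\to L^2}\gtrsim(\log N)^{1/2}$ in every dimension. To see this, take $f=\chi_{E\times[0,1]^{n-2}}$, where $E\subset\R^2$ is a Kakeya-type union of $N$ tubes $T$ of size $N^{-1}\times 1$ in $N^{-1}$-separated directions, with $|E|\lesssim 1/\log N$ and with translates $T+2e_T$ along their axes pairwise disjoint. The slabs $T\times[0,1]^{n-2}$ are comparable to coins of eccentricity $\sim N$. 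So $M^{\textup{coin}}_{CN}f\gtrsim 1$ on a set of measure $\gtrsim 1$, while $\|f\|_2^2\lesssim 1/\log N$.

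Nor can the intermediate pieces be handled with your $1/N$-net and the count of $2^{m(n-1)}$ neighbouring sectors. Without decay this loses a power of $N$, since $\sum_{\omega\in\Omega_N}|\widehat{\phi_{h,\omega}}(\xi)|^2\sim(h|\xi|)^{-(n-1)}$ for $(hN)^{-1}\lesssim|\xi|\lesssim h^{-1}$.

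What is missing is a multi-resolution treatment of the direction variable. On the annulus $|\xi|\sim 2^{-m}h^{-1}$ the multiplier varies in $\omega$ only on the scale $2^m/N$, so there you should use a $2^m/N$-net. A coin of thickness $h$ whose normal lies within $2^m/N$ of $\omega'$ is contained in the fattened coin of thickness $\sim 2^m h$, radius $\sim hN$ and normal $\omega'$. That fattened coin is exactly the dual box of the $2^m/N$-sector around $\omega'$ at frequency $2^{-m}h^{-1}$. So the corresponding frequency piece of $f$ is essentially constant on it and is controlled by a fixed-orientation strong maximal function. Orthogonality of the $2^m/N$-sectors, together with a square function over dyadic scales, then gives an $O(1)$ bound for each $m$. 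Summing over $0\le m\le\log_2 N$ gives the theorem with $a(n)=1$.

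With this repair your Fourier route is a legitimate alternative to the paper's argument, which uses no Fourier analysis at all. The paper:
\begin{itemize}
\item fixes $h=1$ and restricts to normals within $\pi/4$ of the vertical;
\item linearizes the maximal operator and passes to the adjoint $T^*$;
\item splits $f$ into the $N^{n-1}$ vertical columns $f_\nu$ of a cube of side $N$;
\item uses the geometric bound $|R^j_\nu\cap R^k_\nu|\lesssim N^{n-1}/(|j-k|+1)$ to get $\|T^*f_\nu\|_2^2\lesssim N^{-(n-1)}\log(N+1)\|f_\nu\|_2^2$;
\item sums over columns with Cauchy--Schwarz, and finally pigeonholes over heights.
\end{itemize}
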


Regarding the highly interesting $M_N^{\textup{needle}}$, the conjecture is that the logarithmic bound holds in the space $L^p(\R^n)$, $p\ge n$. This 
 is a result having consequences for the precise estimate of the Hausdorff dimension of Kakeya sets, but also for the understanding of the spherical summation operators (\cite{Co1} and \cite{Fe}), but it is only known for $n=2$ and remains open for $n\ge3$ where we only have partial results.

There is, however, an estimate valid in any dimension, so long as the set of directions of the needles is restricted to be in a smooth curve in $S^{n-1}$.

Let \[\gamma\colon [0,1]\to S^{n-1}\] be a $C^1$-curve satisfying the following property: there exists $C=C(\gamma)<\infty$ such that
\[
\HH^{n-1}\{\xi\in S^{n-1}:\card\{t\in[0,1]:\gamma'(t)\cdot\xi=0\}>C\}=0.
\]
Let us define
\[
\BB_N^\gamma=\{\text{needles }R,\ \text{eccentricity}(R)=N,\ \text{direction}(R)\in\gamma([0,1])\},
\]
and let us consider $M_N^\gamma$, the associated maximal function. Reference \cite{Co3} contains a proof of the following.

\begin{theorem}\label{th9}We have that
\[
\|M_N^\gamma f\|_{L^2(\R^n)}\le C(\gamma)[\log(N+1)]^{a(n)}\|f\|_{L^2(\R^n)},
\]
where $a(n)$ is a finite constant depending only upon the dimension $n$.
\end{theorem}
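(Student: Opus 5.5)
The plan is to prove the $L^2$ bound by a Fourier-analytic reduction in the spirit of the argument behind Theorem~\ref{th8}: localize the directions, localize in frequency, and use the hypothesis on $\gamma$ to control how much the resulting frequency regions overlap.

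\emph{Step 1: reduction to a dyadic, discrete problem.} By positivity and dilation invariance, I will first replace $M_N^\gamma$ by a maximal operator built from smooth averages. Each such average is $A_{j,r}f=f*\phi_{j,r}$, where $\phi_{j,r}$ is an $L^1$-normalized bump adapted to a needle of radius $r$, length $rN$ and axis $\gamma_j=\gamma(t_j)$. Here $r=2^{-\ell}$ ranges over dyadic scales, and $\{t_j\}_{j\lesssim N}$ is a $1/N$-net of $[0,1]$. This costs only a constant, since $\gamma\in C^1$: a needle whose direction lies within $\lesssim 1/N$ of $\gamma_j$ is contained in a fixed dilate of a needle with axis $\gamma_j$. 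So it suffices to bound $\sup_{j,\ell}|A_{j,2^{-\ell}}f|$ in $L^2$ by $[\log N]^{a(n)}\|f\|_2$.

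\emph{Step 2: frequency decomposition.} The multiplier $\widehat{\phi_{j,r}}$ is essentially supported in the slab $\{|\xi|\lesssim 1/r,\ |\xi\cdot\gamma_j|\lesssim 1/(rN)\}$, and it decays rapidly away from it. I will split $\widehat{\phi_{j,r}}=m^0_{j,r}+\sum_{s\ge1}m^s_{j,r}$ as follows. The piece $m^0_{j,r}$ is a radial Littlewood--Paley multiplier at scale $1/r$ that does not depend on $j$; its maximal function is dominated by the Hardy--Littlewood maximal function, hence bounded on $L^2$. Each $m^s_{j,r}$ lives where $|\xi|\sim 2^s/r$ and $|\xi\cdot\gamma_j|\lesssim 1/(rN)$, and it carries decay $2^{-sM}$ once $2^s\gtrsim N$. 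Only $s\lesssim\log N$ therefore matter, and this already accounts for a first logarithmic factor. For fixed $s$, I will dominate the supremum over $(j,\ell)$ by a square function in $\ell$ and control the supremum over $j$ by
\[
\sup_j|T_jf|\le\Big(\sum_j|T_jf|^2\Big)^{1/2}.
\]
This is the crude step; it is made affordable because each $T_j$ acts on the Fourier piece $P_jf$ localized to the slab $S_j$ (a directional Littlewood--Paley projection). Each individual $T_j$ is dominated pointwise by an iterated one-dimensional maximal function along an orthonormal frame containing $\gamma_j$, which is $L^2$-bounded uniformly in $j$. Plancherel then gives
\[
\Big\|\Big(\sum_j |T_jP_jf|^2\Big)^{1/2}\Big\|_2^2\lesssim\int |\hat f(\xi)|^2\,\card\{j:\xi\in S_j\}\,d\xi .
\]

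\emph{Step 3 (main obstacle): the overlap count.} It remains to show that, for a.e.\ $\xi$, the number of $j$ with $\xi\in S_j$ is $\lesssim C(\gamma)$, uniformly over each dyadic shell. Equivalently, the set of $t\in[0,1]$ with $|\gamma(t)\cdot\omega|\lesssim 2^{-s}$, where $\omega=\xi/|\xi|$, must be covered by $O(1)$ intervals of length $\sim 2^{-s}$ in $t$. The hypothesis of the theorem is exactly what is needed for the qualitative part: for a.e.\ $\omega$, $t\mapsto\gamma(t)\cdot\omega$ has at most $C$ critical points, so it is piecewise monotone with at most $C+1$ pieces. The delicate point is the quantitative part, since near a degenerate zero the sublevel set can be long. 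I expect to handle this by allowing a loss: I will count the $j$'s by dyadic sublevel sizes and group the slabs accordingly. On each monotone piece, the sets $\{j:\xi\in S_j\}$ for different $\xi$ in the same shell are nested intervals of indices. The almost-orthogonality can then be recovered by a Cotlar/Rademacher--Menshov-type argument over these interval families, costing $(\log N)^{O(1)}$. Summing over the $O(\log N)$ values of $s$ and applying the square function in $\ell$ then yields $[\log(N+1)]^{a(n)}$, with constant $C(\gamma)$. If this step resists, the fallback is to follow the scheme of \cite{Co3} and induct on the dimension, using the $C$-fold monotonicity to reduce to planar curves of directions, where the two-dimensional logarithmic bound is available.
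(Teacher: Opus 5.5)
\textbf{There is a genuine gap in Steps 2--3.} It sits in the frequency range $1/(rN)\lesssim|\xi|\lesssim 1/r$, which is where all the difficulty lives.

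\emph{Literal reading.} If $m^0$ is at scale $1/r$ and $m^s$ lives on $|\xi|\sim 2^s/r$, then $m^0_{j,r}$ cannot be independent of $j$. On $|\xi|\lesssim 1/r$ the multiplier $\widehat{\phi_{j,r}}$ is essentially the indicator of the slab $|\xi\cdot\gamma_j|\lesssim 1/(rN)$. So the whole Kakeya-type problem is hidden in $m^0$, and the Hardy--Littlewood maximal function does not control it.

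\emph{Corrected reading.} Suppose instead $m^0$ is at scale $1/(rN)$ and $m^s$ lives on $|\xi|\sim 2^s/(rN)$; this is what matches ``decay once $2^s\gtrsim N$''. Then $\xi\in S_j$ means $|\omega\cdot\gamma(t_j)|\lesssim 2^{-s}$. Since $|\gamma'|$ is bounded, around any zero of $t\mapsto\omega\cdot\gamma(t)$ this holds for all $t_j$ in an interval of length $\gtrsim 2^{-s}$, i.e.\ for $\gtrsim N2^{-s}$ points of your $1/N$-net. So the overlap count is $\sim N2^{-s}$, not $O(1)$. The ``equivalently'' in Step~3 silently replaces the $1/N$-net by a $2^{-s}$-net. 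With the correct count, $\sup_j\le(\sum_j|\cdot|^2)^{1/2}$ loses $(N2^{-s})^{1/2}$, which for small $s$ is the trivial bound $N^{1/2}$. Passing to a $2^{-s}$-net at level $s$ would require controlling the supremum over the $\sim N2^{-s}$ directions in each $2^{-s}$-cap, and that is precisely the content of the theorem.

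\emph{Sublevel sets.} Even granting a $2^{-s}$-net, an $O(1)$ count needs $|\{t:|\omega\cdot\gamma(t)|\le\delta\}|\lesssim\delta$ uniformly in $\omega$. The qualitative hypothesis does not give this. For $\gamma$ an arc of a great circle in the $(x_1,x_2)$-plane and $\omega$ with $|(\omega_1,\omega_2)|\le\delta$, every $t$ lies in the sublevel set. Your suggested Cotlar/Rademacher--Menshov repair is not specified enough to close this.

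\textbf{The missing idea} is to sum \emph{consecutive differences} of the multipliers rather than count how many are nonzero at $\xi$. In the great-circle example, many slabs contain $\xi$, but the multipliers barely change from one $j$ to the next. The paper proceeds as follows.
\begin{enumerate}
\item Fix the radius by dilation and take the one-dimensional averages $T_{j/2^k}f(x)=\int f(x-t\gamma(j/2^k))\varphi(t)\,dt$.
\item Refine the direction mesh dyadically: $M_{2^{k+1}}f\le M_{2^k}f+G_{2^k}f$, where $G_{2^k}f$ is the square function of differences of neighbouring averages.
\item By Plancherel, $\|G_{2^k}f\|_2\lesssim\|f\|_2$ follows from a uniform bound on $\sum_j|\widehat\varphi(\xi\cdot\gamma(t_{j+1}))-\widehat\varphi(\xi\cdot\gamma(t_j))|^2$.
\item This sum is at most $2\|\widehat\varphi\|_\infty$ times the total variation of $t\mapsto\widehat\varphi(\xi\cdot\gamma(t))$. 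That variation is $\le(C+1)\|\widehat\varphi'\|_{L^1}$, because $\xi\cdot\gamma$ is monotone on at most $C+1$ intervals for a.e.\ $\xi$.
\end{enumerate}
This uses only the qualitative hypothesis and needs no frequency decomposition or sublevel-set estimate. Each refinement costs $O(1)$, so $\|M_{2^k}\|_{2\to2}\lesssim k$ for a fixed radius. The different radii are then combined by the pigeonhole argument of \cite{Co1}. Your Rademacher--Menshov remark points in this direction, but it must be applied to the whole ordered family of averages from the start, with bounded variation replacing the overlap count.
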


Theorem~\ref{th9} has the following application.

\begin{theorem}\label{th10}Suppose that $K_\gamma$ is a bounded set in~$\R^n$ containing a unit segment in each direction of the curve $\gamma$. Then
\[
\dim_H(K_\gamma)\ge2.
\]
\end{theorem}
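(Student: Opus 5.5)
We prove Theorem~\ref{th10} with the dyadic-covering scheme from the proof of Theorem~\ref{th1}. The only change is the incidence estimate: the $L^2$ bound of Theorem~\ref{th9} replaces the explicit trigonometric overlap count.

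\emph{Covering and pigeonholing.} Take a covering $\{Q_\sigma\}$ of $K_\gamma$ by dyadic cubes of side at most $2^{-k_0}$, and define $E^k$ and $N_k$ as before. It suffices to prove the following: for every $\alpha<2$ there is $\lambda_\alpha>0$ such that $N_k2^{-k\alpha}\ge\lambda_\alpha$ for some $k\ge k_0$.

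For each $t\in[0,1]$ let $I_t\subset K_\gamma$ be a unit segment in direction $\gamma(t)$. The argument of Lemma~\ref{le3} works verbatim with $C_\theta$ replaced by $I_t$, now without any octant splitting. Given $\beta>0$, it yields $\lambda_\beta>0$ such that each $t$ lies in some
\[
\Omega_k=\{t:\HH^1(I_t\cap E^k)\ge\lambda_\beta2^{-k\beta}\}.
\]
Summing over $k$ then gives some $k$ with $|\Omega_k|\ge\lambda_\beta2^{-k\beta}$.

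\emph{Applying Theorem~\ref{th9}.} Fix this $k$ and set $\delta=2^{-k}$. For $t\in\Omega_k$, let $T_t$ be the $\delta$-neighbourhood of $I_t$. This is a needle of eccentricity $N\simeq\delta^{-1}$ with direction in $\gamma([0,1])$. As in the proof of Theorem~\ref{th1}, using the Hardy--Littlewood maximal function,
\[
|T_t\cap E^k|\gtrsim\lambda_\beta2^{-k\beta}\,\delta^{n-1}=\lambda_\beta2^{-k\beta}|T_t|,
\]
since the set $I_t\cap E^k$ of length at least $\lambda_\beta2^{-k\beta}$ is covered by $\delta$-cubes of $E^k$. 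Hence, for every $x\in T_t$,
\[
M_N^\gamma\chi_{E^k}(x)\gtrsim\lambda_\beta2^{-k\beta}.
\]
So the level set $\{M_N^\gamma\chi_{E^k}\gtrsim\lambda_\beta 2^{-k\beta}\}$ contains $\bigcup_{t\in\Omega_k}T_t$. Chebyshev's inequality and Theorem~\ref{th9} then give
\[
\Big|\bigcup_{t\in\Omega_k}T_t\Big|\lesssim \lambda_\beta^{-2}2^{2k\beta}k^{2a(n)}\,|E^k|.
\]

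\emph{Lower bound for the union of tubes (main obstacle).} We need
\[
\Big|\bigcup_{t\in\Omega_k}T_t\Big|\gtrsim \lambda_\beta^{C}2^{-Ck\beta}\,\delta^{n-2}.
\]
Choose $\simeq\lambda_\beta2^{-k\beta}\delta^{-1}$ parameters $t_j\in\Omega_k$ that are $\delta$-separated. Because $\gamma$ is $C^1$ with (essentially) nonvanishing speed, as the nondegeneracy hypothesis suggests, the directions $\gamma(t_j)$ are $\gtrsim\delta$-separated. Two tubes whose directions make angle $\simeq\delta|i-j|$ meet in a set of measure
\[
|T_{t_i}\cap T_{t_j}|\lesssim \frac{\delta^{n-1}}{|i-j|+1}.
\]
This is the same computation as for the meridian octants in the proof of Theorem~\ref{th1}. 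Cauchy--Schwarz, applied exactly as in \eqref{eq2}, gives
\[
\Big|\bigcup_j T_{t_j}\Big|\gtrsim \frac{\big(\sum_j|T_{t_j}|\big)^2}{\sum_{i,j}|T_{t_i}\cap T_{t_j}|}\gtrsim\frac{(J\delta^{n-1})^2}{J\log J\,\delta^{n-1}}\simeq\frac{\lambda_\beta2^{-k\beta}\,\delta^{n-2}}{k},
\]
where $J$ is the number of tubes. The delicate point is reparametrizing $\gamma$ so that parameter separation really does yield angular separation. If $\gamma'$ vanishes somewhere, one restricts to a subarc where $|\gamma'|\ge c>0$, which is possible whenever $\gamma$ is nonconstant.

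\emph{Conclusion.} Combining the two bounds gives
\[
N_k2^{-kn}=|E^k|\gtrsim \lambda_\beta^3k^{-1-2a(n)}2^{-3k\beta}2^{-k(n-2)}.
\]
Hence $N_k2^{-k\alpha}\gtrsim 2^{k(2-\alpha-3\beta-o(1))}$, which is bounded below once $\beta$ is small relative to $2-\alpha$. This proves $\dim_H(K_\gamma)\ge2$.
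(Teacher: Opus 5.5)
Your proposal is correct and follows the paper's proof essentially step for step. It uses the same pigeonholing to a scale $k$ with $\gtrsim\lambda_\beta 2^{k(1-\beta)}$ separated directions, the same Cauchy--Schwarz lower bound $\bigl|\bigcup_j T_{t_j}\bigr|\gtrsim k^{-1}\lambda_\beta 2^{-k\beta}2^{-k(n-2)}$ from the overlap bound $\delta^{n-1}/(|i-j|+1)$, and the same level-set inclusion combined with the $L^2$ bound of Theorem~\ref{th9}. The differences are minor: you parametrize by $t$ and restrict to a subarc with $|\gamma'|\ge c>0$ to justify the angular separation that the paper simply asserts, and your exponent $n-2$ in the union bound is the correct one (the paper's displayed $n-3$ is a misprint, as its own final estimate shows).
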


\begin{proof} As in Theorem~\ref{th1}, let $\{Q_\nu\}$ be a covering of $K_\gamma$ by dyadic cubes of side length
\[
\ell(Q_\nu)\le2^{-k_0}.
\]
That is,
\[
K_\gamma\subset\bigcup_\nu Q_\nu=\bigcup_{k\ge k_0}\Big(\bigcup_\nu Q_\nu^k\Big),
\]
where $\ell(Q_\nu^k)=2^{-k}$. Define
\[
E^k=\bigcup_\nu Q_\nu^k,
\quad
N_k=\card\{Q_\nu^k\}=\text{ Number of cubes of length }2^{-k}\text{ in the covering}.
\]
It follows from the definition of Hausdorff dimension that Theorem~\ref{th10} will follow from the estimate
\[
\sum_{k\ge k_0}N_k\,2^{-k\alpha}\ge\lambda_\alpha>0
\]
for every $\alpha<2$. But, as in Theorem~\ref{th1}, it will be convenient to prove something stronger, namely: for every $\alpha<2$, there exists $k\ge k_0$ such that
\[
N_k\,2^{-k\alpha}\ge\lambda_\alpha>0.
\]

By definition, $K_\gamma$ contains a straight line segment $L_\theta$ of length $1$, for each direction $\theta$ in the curve $\gamma$. Therefore,
\[
1=\HH^1(L_\theta)\le\sum_{k\ge k_0}\HH^1(L_\theta\cap E^k),
\]
where $\HH^1$ denotes $1$-dimensional Hausdorff measure.

Arguing by contradiction, we may conclude that for each $\beta>0$, there exist $\lambda_\beta>0$ and $k\ge k_0$ such that
\[
\HH^1(L_\theta\cap E^k)\ge\lambda_\beta\,2^{-k\beta}.
\]
Next, we introduce the sets
\[
\Gamma_k=\{\omega\in\gamma([0,1]):\HH^1(L_\omega\cap E^k)\ge\lambda_\beta\,2^{-k\beta}\}
\]
and observe that
\[
\length(\gamma)\lesssim\sum_{k\ge k_0}\HH^1(\Gamma_k),
\]
because for each direction $\omega\in\operatorname{Im}(\gamma)$ there is, at least, a value $k\ge k_0$ satisfying
\[
\HH^1(L_\omega\cap E^k)\ge\lambda_\beta\,2^{-k\beta}.
\]
We can then infer the existence of $k\ge k_0$ so that
\[
\HH^1(\Gamma_k)\ge\lambda_\beta\,2^{-k\beta}
\]
(because in the opposite case we would 
 obtain the inequality
\[
\length(\gamma)\le\lambda_\beta\sum_{k\ge k_0} 2^{-k\beta}=\lambda_\beta\,\frac{2^{-k_0\beta}}{1-2^{-\beta}}
\]
which is obviously false for $\lambda_\beta$ small enough).

A consequence of the estimate $\HH^1(\Gamma_k)\ge\lambda_\beta\,2^{-k\beta}$ is that we can select $\lambda_\beta\,2^{k(1-\beta)}$ different angles $\{\omega_j\}$ inside $\Gamma_k$ so that
\[
|\omega_j-\omega_\ell|\gtrsim2^{-k}|j-\ell|.
\]
Consider now the corresponding $L_{\omega_j}$ segments contained in $K_\gamma$ in those selected directions, and associate to each of them a tube $R_j$ of radius $2^{-k}$ and height $=1$, pointing in the direction $\omega_j$.
We have
\[
\sum_j\mu(R_j)\gtrsim2^{-(n-1)k}\lambda_\beta\,2^{k(1-\beta)}.
\]
On the other hand, H\"{o}lder's inequality yields:
\begin{align*}
\sum_j\mu(R_j)=\int\sum_j\chi_{R_j}
&\le \Big[\mu\Big(\bigcup R_j\Big)\Big]^{1/2}\Big[\sum_{i,j}\mu(R_i\cap R_j)\Big]^{1/2}
\\
&\lesssim \Big[\mu\Big(\bigcup R_j\Big)\Big]^{1/2}
\Big[2^{-k(n-1)}\lambda_\beta\,2^{k(1-\beta)}\sup_i\Big(\sum_j\frac1{|i-j|+1}\Big)\Big]^{1/2},
\end{align*}
implying
\[
\mu\Big(\bigcup R_j\Big)\gtrsim \frac1k\,\lambda_\beta\,2^{-k(n-3+\beta)}.
\]
Finally, let us observe that
\[
\bigcup R_j\subset\{M_{2^k}^\gamma\chi_{E^k}\ge\lambda_\beta\,2^{-k\beta}\}.
\]
Therefore, using the estimate of Theorem~\ref{th9}, we get
\[
\mu(E^k)\gtrsim\frac1{k^{a(n)}}\,2^{-k[n-2+3\beta]}\,\lambda_\beta^3.
\]
Since
\[
N_k\,2^{-kn}=\mu(E^k),
\]
we get
\[
N_k\,2^{-k\alpha}\gtrsim\frac1{k^{a(n)}}\,\lambda_\beta^3 \,2^{k[2-\alpha-3\beta]}\ge\lambda_\alpha>0
\]
for every $\alpha<2$, so long as we have chosen $\beta>0$ small enough.
\end{proof}

\begin{remark}The estimate $\dim_H(K_\gamma)\ge2$ is sharp, and it suggests the following conjecture:
\begin{quotation}
\emph{Let $M$ be a $k$-dimensional submanifold of\/ $S^{n-1}$ and consider a bounded set $K_M\subset\R^n$ containing a unit length segment in each direction of\/ $M$. Then $\dim_H(K_M)\ge k+1$}.
\end{quotation}

In the important particular case $n=3$, $M=S^2$, the conjecture has been proven recently by Hong Wang and Joshua Zahl \cite{WZ}.
\end{remark}

The hypothesis in the theorems above can be relaxed asking only the inclusion of circles or intervals in a dense subset of directions. But the situation changes completely if we substitute spheres by balls, as the following theorem shows.

\begin{theorem}\label{th11}Let $K\subset\R^n$ be a bounded set containing a unit $(n-1)$-ball in each spatial direction. Then we have
\[
\dim_H(K)=n.
\]
\end{theorem}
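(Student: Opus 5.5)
Since $K$ is bounded, $\dim_H(K)\le n$ is trivial, and the whole task is the lower bound $\dim_H(K)\ge n$. I would follow the template of Theorem~\ref{th1} and Theorem~\ref{th10}, with unit $(n-1)$-balls (thin coins) replacing circles or segments, and with Theorem~\ref{th8} playing the role that Theorem~\ref{th9} played for needles.

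\emph{Setup and pigeonholing.} Take a dyadic covering $K\subset\bigcup_{k\ge k_0}E^k$, with $E^k$ a union of $N_k$ cubes of side $2^{-k}$. For each direction $\theta\in S^{n-1}$ let $B_\theta\subset K$ be the unit $(n-1)$-ball orthogonal to $\theta$. Since
\[
\HH^{n-1}(B_\theta)\le\sum_{k\ge k_0}\HH^{n-1}(B_\theta\cap E^k),
\]
the same geometric-series contradiction as in Lemma~\ref{le3} gives the following: for each $\beta>0$ and each $\theta$ there is $k\ge k_0$ with $\HH^{n-1}(B_\theta\cap E^k)\ge\lambda_\beta 2^{-k\beta}$. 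Define $\Omega_k$ as the set of such $\theta$. The sets $\Omega_k$ cover $S^{n-1}$, so a second summation argument yields some $k$ with $\HH^{n-1}(\Omega_k)\ge\lambda_\beta 2^{-k\beta}$. Fix this $k$. For $\theta\in\Omega_k$ let $R_\theta$ be the coin of thickness $2^{-k}$ around $B_\theta$, that is, the $2^{-k}$-neighbourhood of $B_\theta$. This is an $N$-coin with $N\simeq 2^{k}$.

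\emph{Maximal function step.} For $\theta\in\Omega_k$, every point $x\in R_\theta$ satisfies
\[
\frac{1}{|R_\theta|}\int_{R_\theta}\chi_{\widetilde E^k}\gtrsim \lambda_\beta 2^{-k\beta},
\]
where $\widetilde E^k$ is the $2^{-k}$-neighbourhood of $E^k$. Indeed, $B_\theta\cap E^k$ has $(n-1)$-measure at least $\lambda_\beta2^{-k\beta}$, so its $2^{-k}$-thickening inside $R_\theta$ has volume $\gtrsim\lambda_\beta2^{-k\beta}2^{-k}$, while $|R_\theta|\simeq 2^{-k}$. We have $|\widetilde E^k|\lesssim|E^k|$ by the usual doubling argument (each cube has $3^n$ neighbours). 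Hence
\[
\bigcup_{\theta\in\Omega_k}R_\theta\subset\{M^{\textup{coin}}_{2^k}\chi_{\widetilde E^k}\gtrsim\lambda_\beta 2^{-k\beta}\}.
\]
Chebyshev's inequality and Theorem~\ref{th8} then give
\[
\Big|\bigcup_{\theta\in\Omega_k} R_\theta\Big|\lesssim k^{2a(n)}\lambda_\beta^{-2}2^{2k\beta}|E^k|.
\]

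\emph{Lower bound for the union (main obstacle).} It remains to show that $|\bigcup_{\theta\in\Omega_k}R_\theta|\gtrsim \lambda_\beta^{C}2^{-Ck\beta}$, a quantity that does not decay like a power of $2^{-k}$. Granting this, $N_k2^{-kn}=|E^k|\gtrsim k^{-2a(n)}\lambda_\beta^{C}2^{-Ck\beta}$, so $N_k2^{-k\alpha}\to\infty$ for every $\alpha<n$ once $\beta$ is small, and the theorem follows as in Claim~\ref{cl2}.

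To get this bound I would use a Fourier or Radon-type argument rather than Hölder, because the coins overlap heavily: any two of them meet in a set of size roughly $2^{-k}/|\theta-\theta'|$. Set $F=\sum_{\theta}\chi_{R_\theta}$ over a $2^{-k}$-separated family $A\subset\Omega_k$ of cardinality $\simeq\lambda_\beta 2^{-k\beta}2^{k(n-1)}$. Then $\|F\|_1\simeq\card(A)\,2^{-k}$. For $\|F\|_2^2$, sum the pairwise overlaps
\[
\sum_{\theta,\theta'\in A}|R_\theta\cap R_{\theta'}|\lesssim \sum_{\theta,\theta'}\frac{2^{-k}}{2^{-k}+|\theta-\theta'|},
\]
and estimate the inner sum in polar coordinates on $S^{n-1}$, as was done for $n\ge4$ in Theorem~\ref{th6}. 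This sum is $\lesssim\card(A)\,2^{k(n-2)}$ when $n\ge3$, with an extra $k$ factor when $n=2$. Cauchy--Schwarz then gives
\[
\Big|\bigcup_{\theta\in A} R_\theta\Big|\ge\frac{\|F\|_1^2}{\|F\|_2^2}\gtrsim\frac{\card(A)\,2^{-2k}}{2^{k(n-2)}}\gtrsim\lambda_\beta 2^{-k\beta}
\]
(divided by $k$ when $n=2$). This is the required bound. The delicate points are the overlap computation $|R_\theta\cap R_{\theta'}|\lesssim 2^{-k}/(2^{-k}+|\theta-\theta'|)$ for unit coins of thickness $2^{-k}$, and controlling the measurability of $\Omega_k$ through outer measure, as the paper does elsewhere.
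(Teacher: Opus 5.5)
Your route is the paper's. You pigeonhole to a scale $k$ and a $2^{-k}$-separated family $A$ of $\simeq\lambda_\beta 2^{-k\beta}2^{k(n-1)}$ normals, thicken the balls to coins of height $2^{-k}$, and bound $|\bigcup R_\theta|$ from below by Cauchy--Schwarz and from above via Theorem~\ref{th8}. The Cauchy--Schwarz step \emph{is} the paper's H\"older step, so no Fourier or Radon argument is needed. Your use of the neighbourhood $\widetilde E^k$ in the maximal-function step is fine.

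The problem is the overlap estimate, which carries the whole argument. You assert $|R_\theta\cap R_{\theta'}|\lesssim 2^{-k}/(2^{-k}+|\theta-\theta'|)$. For $\theta=\theta'$ this equals $1$, while $|R_\theta|\simeq 2^{-k}$, so it is too weak by a factor $2^{k}$. Your heuristic ``$2^{-k}/|\theta-\theta'|$'' would even make two transversal coins share a fixed fraction of their volume. Your pair sum $\lesssim\card(A)\,2^{k(n-2)}$ is correct for that bound, but then
\[
\frac{\|F\|_1^2}{\|F\|_2^2}\gtrsim\frac{\card(A)\,2^{-2k}}{2^{k(n-2)}}=\card(A)\,2^{-kn}\simeq\lambda_\beta\,2^{-k\beta}\,2^{-k},
\]
not $\lambda_\beta 2^{-k\beta}$. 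So the last inequality of your chain is false. Carried through honestly, the argument gives only $|E^k|\gtrsim k^{-2a(n)}\lambda_\beta^3 2^{-k(1+3\beta)}$, i.e.\ $\dim_H K\ge n-1$. That is trivial, because $K$ already contains an $(n-1)$-ball.

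The repair is the correct geometry. Two slabs of thickness $\delta=2^{-k}$ whose normals make an angle $\phi$ meet in a set whose section by the plane of $\theta,\theta'$ is a parallelogram of area $\simeq\delta^2/\phi$, times an $(n-2)$-dimensional unit disc. Hence
\[
|R_\theta\cap R_{\theta'}|\lesssim\min\Big(2^{-k},\frac{2^{-2k}}{|\theta-\theta'|}\Big)\simeq\frac{2^{-2k}}{2^{-k}+|\theta-\theta'|},
\]
which is the paper's $2^{-k}/(\|\nu-\mu\|+1)$ with $\|\nu-\mu\|$ measured in units of $2^{-k}$.

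The same polar-coordinate count now gives $\|F\|_2^2\lesssim\card(A)\,2^{k(n-3)}$ for $n\ge3$, and $\lesssim k\,2^{-k}\card(A)$ for $n=2$. Hence $|\bigcup_{\theta\in A}R_\theta|\gtrsim\card(A)\,2^{-k(n-1)}\simeq\lambda_\beta 2^{-k\beta}$, divided by $k$ if $n=2$. Your conclusion $N_k2^{-k\alpha}\gtrsim k^{-2a(n)}\lambda_\beta^3 2^{k(n-\alpha-3\beta)}$ then follows exactly as in the paper.

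With the correct overlap the coins are almost orthogonal in $L^2$, which is why plain H\"older suffices. Your stated reason for wanting a different tool, that ``the coins overlap heavily'', comes from the mis-scaled bound.
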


\begin{proof} It will be a consequence of Theorem~\ref{th8} and of the method developed in Theorem~\ref{th1}. Using the same notation about the covering by dyadic cubes $\{Q_\nu\}$, the sets $E^k=\bigcup_\nu Q_\nu^k$, $k\ge k_0$, and arguing as in Theorem~\ref{th1}, we conclude the existence of $k\ge k_0$ and a set $A=\{\nu\}\subset S^{n-1}$ of directions containing $\lambda_\beta\,2^{(n-1)k}\,2^{-k\beta}$ elements, such that the $(n-1)$-dimensional ball $D_\nu\subset K$, having $\nu$ as normal direction, satisfies the estimate
\[
\HH^{n-1}(D_\nu\cap E^k)\ge\lambda_\beta\,2^{-k\beta},
\]
and
\[
\|\nu-\mu\|\ge2^{-k}\quad\text{for every }\nu\ne\mu\text{ in }A.
\]
Here, $\beta$ is any positive number and $\lambda_\beta$ is small enough.

Next, to each $D_\nu$, we associate the coin $\tilde  D_\nu$ of height $2^{-k}$ (radius $1$) and observe that
\[
\sum_\nu |\tilde  D_\nu|\gtrsim\lambda_\beta\,2^{-k\beta}\,2^{k(n-2)}.
\]
H\"{o}lder's inequality yields
\begin{align*}
\sum_\nu |\tilde  D_\nu|&=\int\sum_\nu\chi_{\tilde  D_\nu}
\le\Big|\bigcup_\nu\tilde  D_\nu\,\Big|^{1/2}
\Big[\sum_{\nu,\mu}|\tilde  D_\nu\cap\tilde  D_\mu|\Big]^{1/2}
\\
&\lesssim \Big|\bigcup_\nu\tilde  D_\nu\,\Big|^{1/2}
\Big[\lambda_\beta\,2^{k(n-1)}\,2^{-k\beta}\sup_\nu\Big(\sum_\mu\frac{2^{-k}}{\|\nu-\mu\|+1}\Big)\Big]^{1/2},
\end{align*}
from which we obtain
\[
\Big|\bigcup\tilde  D_\nu\,\Big|\ge\lambda_\beta\,2^{-k\beta}.
\]
Next, we observe that
\[
\bigcup\tilde  D_\nu\subset\{x:M_{2^k}^{\textup{coin}}(\chi_{E^k})\ge\lambda_\beta\,2^{-k\beta}\}
\]
and the estimate $N_k\,2^{-k\alpha}\gtrsim\lambda_\alpha>0$, for every $\alpha<n$, follows from Theorem~\ref{th8}, choosing $\beta>0$ small enough.
\end{proof}

\section{Appendix} For the sake of completeness, we will sketch now 
 the proofs of Theorems \ref{th9} and \ref{th8} (see~\cite{Co1,Co3}); 
 at least the parts of those proofs which were relevant in the discussion of Theorems~\ref{th9} and~\ref{th10} above.

\subsection{Sketch of the proof of Theorem~\ref{th9}} We first prove the estimate when the radius~$r$ has been fixed but, of course, the tubes of eccentricity $N$ are allowed to have arbitrary directions in the given curve $\gamma$. Also let us observe that by a dilation argument, it will be enough to consider the particular case $r=1$.

Next, we use an auxiliary function $\varphi\ge0$, satisfying $\varphi\in C_0^\infty(\R)$, $\supp(\varphi)\subset[-2,+2]$, $\varphi(x)=1$ for $|x|\le1$ and with Fourier transform, $\widehat\varphi$, real valued, to define the average operators
\[
T_{j/2^k}f(x)=\int f\Big(x-t\gamma\Big(\frac{j}{2^k}\Big)\Big)\,\varphi(t)\,dt.
\]
Here, $j$ is an integer, $0\le j\le2^k$.

Taking the Fourier transform, we get the expression
\[
\widehat{T_{j/2^k}f}(\xi)=\widehat f(\xi)\,\widehat\varphi\Big(\xi\cdot\gamma\Big(\frac{j}{2^k}\Big)\Big).
\]
Let us define
\[
M_{2^k}f(x)=\sup_j T_{j/2^k}f(x).
\]
We claim that
\[
\|M_{2^k} f\|_{L^2(\R^n)}\le C(\gamma)\,k  \|f\|_{L^2(\R^n)},
\]
which is proved by induction, because
\[
\|M_{2^{k+1}}f-M_{2^k}f\|
\le\Big[\sum_j |T_{(j+1)/{2^{k+1}}}f-T_{{j}/{2^{k+1}}}f |^2\Big]^{1/2}
=G_{2^k}f.
\]
and
\begin{align*}
\int |G_{2^k}f|^2&=\int |\widehat{G_{2^k}f}|^2
\lesssim\int|\widehat f(\xi)|^2\sum_{j=1}^{2^k}\Big|
\widehat\varphi\Big(\xi\cdot\gamma\Big(\frac{j+1}{2^{k+1}}\Big)\Big)
-\widehat\varphi\Big(\xi\cdot\gamma\Big(\frac{j}{2^{k+1}}\Big)\Big)\Big|^2\,d\xi.
\end{align*}
and because of the hypothesis about $\gamma$, we have
\[
\sum_{j=1}^{2^k}\Big|
\widehat\varphi\Big(\xi\cdot\gamma\Big(\frac{j+1}{2^{k+1}}\Big)\Big)
-\widehat\varphi\Big(\xi\cdot\gamma\Big(\frac{j}{2^{k+1}}\Big)\Big)\Big|^2\le C(\gamma)
\]
for almost every $\xi\in\R^n$.

To finish the proof, we use a pigeonhole argument, see \cite{Co1}, based on the fact that without loss of generality, one can consider only dyadic radius $2^{-k}$, and observe that if two needles~$R_1$ and $R_2$ of eccentricity $N$ have a non-empty intersection, $R_1\cap R_2\ne\emptyset$, and if $\operatorname{radius}(R_1)\le N\operatorname{radius}(R_2)$, then $R_1$ is included in the double $R_2^*$ of $R_2$.

\subsection{Sketch of the proof of Theorem~\ref{th8}}

Elementary considerations yield that, without loss of generality, we may consider only coins whose normal vectors make an angle less than $\pi/4$ with a fixed direction, which in the following will be taken as the vertical one.

First we fix the height $h$ and prove an estimate for the maximal function independent of $h$, which by a dilation argument can be taken to be $h=1$. Then a ``pigeonholing'' argument, similar to the one used in Theorem~\ref{th9}, will allow patching together the estimates for different values of $h$.

Taking into account that the diameter of the coins is $N$, it will be enough to consider the action of the maximal function over functions supported on a cube of side length equal to $N$.

Given such a cube $Q\subset\R^n$, we divide it into $N^n$ subcubes of side length $1$:
\[
Q=\bigcup_{\nu,j}Q_\nu^j,
\]
where $\nu=(\nu_1,\ldots,\nu_{n-1})$, $1\le\nu_i\le N$, denotes the horizontal position, while $j=1,\ldots,N$ indicates the vertical location of $Q_\nu^j$ within the column
\[
E_\nu=\bigcup_{j=1}^N Q_\nu^j.
\]
Then for each choice of coins $R_\nu^j\in\BB_N^{\textup{coin}}$ such that $Q_\nu^j\cap R_\nu^j\ne\emptyset$, we consider the linear operator
\[
Tf(x)=\sum_{\nu,j}\frac1{N^{n-1}}\int_{R_\nu^j}f(y)\,dy\cdot\chi_{Q_\nu^j}(x),
\]
whose adjoint is given by
\[
T^*f(x)=\sum_{\nu,j}\frac1{N^{n-1}}\int_{Q_\nu^j}f(y)\,dy\cdot\chi_{R_\nu^j}(x).
\]
Given $f\in L^2(Q)$, we have the decomposition
\[
f=\sum_\nu f_\nu,
\quad f_\nu=f|_{E_\nu},
\]
with
\begin{align*}
\int|T^*f_\nu|^2
&=
\sum_{j,k=1}^N\frac1{N^{2(n-1)}}
\int_{Q_\nu^j}f_\nu\int_{Q_\nu^k}f_\nu\,|R_\nu^j\cap R_\nu^k|^{1/2}
\\
&\lesssim\frac1{N^{2(n-1)}}\sum_{j,k}
\Big(\int_{Q_\nu^j}f_\nu^2\Big)^{1/2}
\Big(\int_{Q_\nu^k}f_\nu^2\Big)^{1/2}
\frac{N^{n-1}}{|j-k|+1}
\lesssim\frac{\log(N+1)}{N^{n-1}}\,\|f_\nu\|_2^2
\end{align*}
and
\[
\|T^*f\|_2\lesssim\sum_\nu\|T^*f_\nu\|_2
\lesssim\Big[\frac{\log(N+1)}{N^{n-1}}\Big]^{1/2}\sum_\nu\|f_\nu\|_2
\lesssim[\log(N+1)]^{1/2}\,\|f\|_2.
\]


\end{document}